\newtheorem{theorem}{Theorem}
\newtheorem{proposition}[theorem]{Proposition}
\newtheorem{problem}[theorem]{Problem}
\newtheorem{lemma}[theorem]{Lemma}
\newtheorem{remark}[theorem]{Remark}
\numberwithin{theorem}{section}
\numberwithin{equation}{section}
\numberwithin{figure}{section}
\DeclareMathOperator{\Bin}{Bin}
\newcommand{\ve}{\varepsilon}
\newcommand{\wordone}{\mathbbm{1}}
\newcommand{\calF}{\mathcal{F}}
\newcommand{\PP}{\mathbb{P}}
\newcommand{\NN}{\mathbb{N}}
\newcommand{\ZZ}{\mathbb{Z}}
\newcommand{\TT}{\mathbb{T}}
\renewcommand{\vec}{\overrightarrow}
\newcommand{\ra}[1]{\stackrel{#1}{\rightarrow}}
\newcommand{\lra}{\xrightarrow}
\newcommand{\zgg}[1]{\stackrel{#1}{\leadsto}}
\newcommand{\nzgg}[1]{\stackrel{#1}{\not \leadsto}}
\def\arraypar#1{\parbox[c]{\textwidth - 2cm}{\centering #1}}
\newcolumntype{e}{>{\displaystyle}r @{\,} >{\displaystyle}c @{\,} >{\displaystyle}l}
\def\clap#1{\hbox to 0pt{\hss#1\hss}}
\def\truncdiv#1#2{((#1-(#2-1)/2)/#2)}
\def\moduloop#1#2{(#1-\truncdiv{#1}{#2}*#2)}
\def\modulo#1#2{\number\numexpr\moduloop{#1}{#2}\relax}
\newcounter{constant}
\newcommand{\nc}[1]{\refstepcounter{constant}\label{#1}}
\newcommand{\uc}[1]{c_{\ref{#1}}}
\begin{document}

\title{No exceptional words for Bernoulli percolation}

\author{Pierre Nolin\footnote{City University of Hong Kong; E-mail: \texttt{bpmnolin@cityu.edu.hk}. Partially supported by a GRF grant from the Research Grants Council of the Hong Kong SAR (project CityU11304718).}, Vincent Tassion\footnote{ETH Z\"urich; E-mail: \texttt{vincent.tassion@math.ethz.ch}.}, Augusto Teixeira\footnote{IMPA; E-mail: \texttt{augusto@impa.br}.}}

\date{}

\maketitle

\begin{abstract}
  Benjamini and Kesten \cite{Benjamini_Kesten_1995} introduced in 1995 the problem of embedding infinite binary sequences into a Bernoulli percolation configuration, known as \emph{percolation of words}.
  We give a positive answer to their Open Problem 2: almost surely, all words are seen for site percolation on $\ZZ^3$ with parameter $p = \frac{1}{2}$.

  We also extend this result in various directions, proving the same result on $\ZZ^d$, $d \geq 3$, for any value $p \in (p_c^{\textrm{site}}(\ZZ^d), 1 - p_c^{\textrm{site}}(\ZZ^d))$, and for restrictions to slabs. Finally, we provide an explicit estimate on the probability to find all words starting from a finite box.
  \bigskip

  \textit{Key words and phrases: percolation, percolation of words.}
\end{abstract}

\section{Introduction}

\subsection*{Percolation of words}

We consider Bernoulli site percolation on the hypercubic lattice $\ZZ^d$, $d \geq 3$, with parameter $p \in (0,1)$. It is obtained by ``coloring'' at random the vertices of the lattice (with colors $0$ or $1$): each vertex has state $1$ with probability $p$, and $0$ with probability $1-p$, independently of the other vertices. This process displays a phase transition for the existence of an infinite connected component of $1$'s, at a certain critical value $p_c^{\textrm{site}}(\ZZ^d) \in (0, 1)$ of the parameter $p$. It is known from \cite{Campanino_Russo_1985} that the percolation threshold satisfies $p_c^{\textrm{site}}(\ZZ^d) < \frac{1}{2}$, so that at $p = \frac{1}{2}$, infinite connected components of both colors coexist almost surely.

In the present paper, we study the problem known as \emph{percolation of words}, introduced by Benjamini and Kesten \cite{Benjamini_Kesten_1995} in 1995. It pertains to embedding infinite binary words, i.e.\@ sequences of the form $\xi = (\xi_0, \xi_1, \ldots)$ where each $\xi_i \in \{0,1\}$, into the percolation configuration (see Figure \ref{fig:example_word} for an illustration).

\begin{figure}[htbp]
  \centering
  \includegraphics[width=4cm]{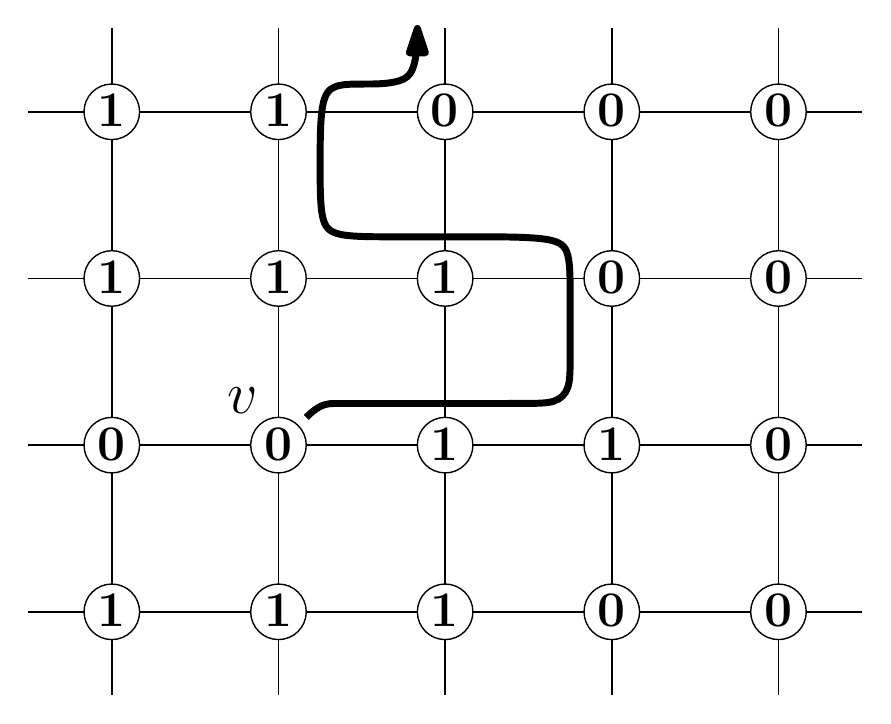}
  \caption{\label{fig:example_word} An embedding of the word $\xi = (0, 1, 1, 0, 1, 1, 1, 0, \ldots)$ into Bernoulli site percolation on the square lattice $\ZZ^2$, starting from a given vertex $v$: we say that the word $\xi$ is seen from $v$.}
\end{figure}

In particular, embedding the constant words $(0, 0, \ldots)$ and $(1, 1, \ldots)$ is equivalent to finding infinite connected components of $0$'s and $1$'s, respectively. Another special case, called \emph{AB percolation}, was introduced earlier by Mai and Halley \cite{Mai_Halley_1980}, motivated by phenomena of polymerization and gelation. In this model, the lattice is randomly populated by ``particles'' of two types, and neighboring particles of opposite types are bonded together (see \cite{Wierman_1989} for a survey). Studying the existence of infinite AB clusters amounts to asking whether the alternating word $(1, 0, 1, 0, \ldots)$ can be embedded. Somewhat surprisingly, it is possible on the triangular lattice at $p = \frac{1}{2}$, as shown by Wierman and Appel \cite{Appel_Wierman_1987}, even though the constant words cannot be embedded (actually, they prove it for all $p$ in the interval $(1 - p_c^{\textrm{site}}(\ZZ^2), p_c^{\textrm{site}}(\ZZ^2))$).

What makes percolation of words particularly challenging to study is its lack of monotonicity. Contrary to usual Bernoulli percolation (i.e. the case of constant words), where one considers events such as ``there exists an infinite connected component of $1$'s'', the events studied in percolation of words are neither increasing nor decreasing in general. For example one can think of the event, from AB percolation, of finding the alternating word $(1, 0, 1, 0, \ldots)$. In particular, the probability that a given non-constant word can be embedded has no clear monotonicity in $p$, and it could even be the case that the set of $p$ for which this probability is positive is not an interval.

\subsection*{Main result: no exceptional words}

Before stating our main result, we need to give a few definitions specific to the study of percolation of words (the more  standard definitions of percolation are presented in Section~\ref{sec:preliminaries}). We denote by \[\Xi := \{0, 1\}^{\mathbb N}\] (where $\mathbb N = \{0, 1, \ldots \}$) the set of all infinite words. For a vertex $v \in \mathbb Z^d$ and a site percolation configuration $\omega$ on $\mathbb Z^d$, we say that a word $\xi \in \Xi$ is \emph{seen from $v$} in $\omega$ if there exists an infinite self-avoiding path $v = v_0 \sim v_1 \sim \ldots$ such that
\begin{equation} \label{eq:see_word}
\omega_{v_i} = \xi_i \:\: \text{for all} \:\: i \geq 0.
\end{equation}
We also introduce the corresponding random set of words $S(v) := \{ \xi \in \Xi \: : \: \xi$ is seen from $v\}$, as well as, for $V \subseteq \ZZ^d$,
$$S_V := \bigcup_{v \in V} S(v) = \big\{ \xi \in \Xi \: : \: \xi \text{ is seen from at least one vertex } v \in V \big\}.$$
Following the notation of \cite{Benjamini_Kesten_1995}, we write $S_{\infty} := S_{\ZZ^d}$: it is the set of words that can be seen somewhere on the lattice.

Benjamini and Kesten proved \cite{Benjamini_Kesten_1995} that for every $\xi \in \Xi$, $\PP_{1/2}(\xi \in S_{\infty}) = 1$. This follows from Wierman's coupling, as explained in Section \ref{sec:Wierman_statement}. Combined with Fubini's theorem, it implies the following property which says, roughly speaking, that the ``uniform random word'' can be seen at $p = \frac{1}{2}$. Consider the product measure $\mu_{1/2} := \big(\frac{1}{2} (\delta_0 + \delta_1) \big)^{\otimes \NN}$ on $\Xi$, then we have
\begin{equation} \label{eq:almost_all}
\PP_{1/2} \big( \mu_{1/2}(S_{\infty}) = 1 \big) = 1.
\end{equation}
In other words, $\PP_{1/2}$-a.s., all ``typical'' $\xi \in \Xi$ can be seen.

This  naturally leads to ask whether there could be some exceptional\footnote{The terminology ``exceptional word'' is inspired from the  work \cite{Schramm_Steif_2010} on exceptional times for critical dynamical percolation. Dynamical percolation is a process indexed by a continuous time parameter $t\in \mathbb R$. At criticality on the triangular lattice, it is known that there is no percolation at every fixed time, and the authors study the existence of possible exceptional times when percolation exists. Here, the situation is analogous, every fixed word can be seen a.s.\@ and we investigate the possibility of possible exceptional words that could not be seen.}  words that cannot be seen, i.e. do we have $S_{\infty} \neq \Xi$ or $S_{\infty} = \Xi$ ($\PP_{1/2}$-a.s.)? This question was stated as Open Problem 2 in \cite{Benjamini_Kesten_1995}, and it remained widely open since then. Our main result gives a complete answer to it.

\begin{theorem} \label{thm:main}
For $d = 3$,
\begin{equation} \label{eq:all_words}
\PP_{1/2}(S_{\infty} = \Xi) = 1.
\end{equation}
\end{theorem}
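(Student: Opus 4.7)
The plan is to first reduce to showing $\PP_{1/2}(S_\infty = \Xi) > 0$: since this event is translation-invariant, ergodicity of the Bernoulli product measure promotes positive probability to probability one. The core of the argument is to construct, with positive probability, a labeled full binary tree embedded injectively in $\ZZ^3$ rooted at the origin, in which each tree node maps to a lattice vertex whose state equals the node's label, and tree-adjacent nodes map to lattice-neighbors. Once such a tree exists, every $\xi \in \Xi$ is seen from the origin by reading off the branch indexed by $\xi$, so $S_\infty = \Xi$.

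I would build the tree by a multi-scale renormalization. Fix scales $L_k$ growing exponentially. At scale $k$, maintain $2^k$ ``fronts'' $u_\sigma \in \ZZ^3$ indexed by $\sigma \in \{0,1\}^k$, together with pairwise disjoint self-avoiding paths from the origin to each $u_\sigma$ whose vertex-labels spell $\sigma$. To pass to scale $k+1$, allocate pairwise disjoint ``branching boxes'' $B_\sigma$ of side $\ell_k$ around the fronts $u_\sigma$. Declare $B_\sigma$ \emph{good} if inside it one can extend the incoming path into two internally disjoint continuations, one whose next step lands on a state-$0$ neighbor (producing $u_{\sigma 0}$) and the other on a state-$1$ neighbor (producing $u_{\sigma 1}$), each terminating at a prescribed exit point on $\partial B_\sigma$. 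Since the boxes are pairwise disjoint, the good events at a fixed scale are independent.

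The probabilistic input is an estimate $\PP_{1/2}(B_\sigma \text{ good}) \geq 1 - e^{-c\ell_k}$. Because $p_c^{\textrm{site}}(\ZZ^3) < 1/2$, both colors percolate supercritically at $p = 1/2$, and a Grimmett--Marstrand-type block construction applied separately to each color should give exponentially small defect estimates for such branching gadgets. A union bound over the $2^k$ boxes at scale $k$ then yields failure probability at most $2^k e^{-c\ell_k}$; choosing $\ell_k \asymp k^2$ makes this summable in $k$, so Borel--Cantelli gives that all scales are good with positive probability and the tree is constructed. The geometric constraint that $2^k$ disjoint boxes of side $\ell_k$ fit inside a region of linear size $L_k$ forces $L_k \gtrsim 2^{k/3}\ell_k$, which is realizable in $\ZZ^3$ but would fail in $\ZZ^2$, matching the dimensional hypothesis of the theorem.

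The principal obstacle will be the quantitative lower bound on the branching-gadget good event: one must show, with failure probability exponentially small in the side length of $B_\sigma$, the simultaneous existence of both a $0$-arm and a $1$-arm from a common vertex to prescribed exit points, internally disjoint. This is strictly stronger than the existence of a single word embedding (the Benjamini--Kesten/Wierman input referenced earlier), since the two arms of opposite colors are forced to share an endpoint's neighborhood. I expect the proof to combine a sprinkling step with independence in disjoint three-dimensional sub-slabs of the box, crucially using supercriticality of both colors; matching the incoming path to the prescribed entry vertex, and the outgoing arms to prescribed exits on the opposite face, is the delicate geometric ingredient that makes the three-dimensional hypothesis indispensable.
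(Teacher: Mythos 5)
Your opening reduction via ergodicity is fine, but the object you propose to build cannot exist, for purely deterministic reasons. You ask for an injective embedding of the full infinite binary tree into $\ZZ^3$ in which tree-adjacent nodes map to lattice neighbors: the ball of depth $k$ in the binary tree has $2^{k+1}-1$ vertices, while its image must lie in the lattice ball of radius $k$ around the root, which has only $O(k^3)$ vertices, so injectivity fails for every percolation configuration once $k$ is moderately large. The same volume obstruction kills the multi-scale scheme itself: at scale $k$ each front $u_\sigma$ is the endpoint of a path whose labels spell $\sigma\in\{0,1\}^k$, hence a path with $k$ vertices ending within distance $k$ of the origin, whereas you need $2^k$ pairwise disjoint such paths and $2^k$ pairwise disjoint boxes of side $\ell_k$ around the fronts, forcing a region of linear size of order $2^{k/3}\ell_k\gg k$. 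No choice of $\ell_k$ and no estimate on good boxes can repair this; the constraint is not ``realizable in $\ZZ^3$'', and the role of $d\ge 3$ in the theorem is in fact only to guarantee $p_c^{\textrm{site}}(\ZZ^d)<\tfrac12$, i.e.\ coexistence of infinite clusters of both colors, not extra room for a tree.

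There is a second, related gap even if one drops injectivity: every vertex of a path is read as a letter, so the long connections inside your branching boxes cannot be ``free''. After the single prescribed step onto a $0$- or $1$-neighbor, the remaining $\sim\ell_k$ vertices up to the prescribed exit point also spell letters $k+2,k+3,\ldots$ of whichever word is being followed, and these required letters vary over exponentially many possibilities in $\ell_k$; your good-box event would have to provide exact word-connections for all of them simultaneously, which is precisely the entropy problem the theorem is about and is strictly harder than the two-arm gadget you flag as the main obstacle. The paper's proof does not build a tree: for each word $\xi$ it maintains, on the boundary of a growing box in a slab of thickness $h$, a positive-density set of vertices reached while reading $\xi$, together with the time index $t_x$ along the word (the events $E_m^n(\xi)$); it proves $\PP_p^h\big(E_m^n(\xi)\setminus E_m^{2n}(\xi)\big)\le e^{-chn}$ uniformly in $\xi$ via a renormalized oriented-percolation exploration combined with Wierman's coupling, notes that this event depends only on the first $2Cn$ letters of $\xi$, and then beats the entropy $2^{2Cn}$ of finite words by choosing $h$ large before a union bound. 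Some mechanism of this kind — time-tagged seeds, per-word estimates uniform in the word, and an extra parameter (slab thickness) to dominate the $2^{O(n)}$ word entropy — is what is missing from, and cannot be grafted onto, the binary-tree picture.
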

We prove in fact a stronger statement, see Theorem~\ref{thm:main_strong}. In particular, it is possible to see all words in a (sufficiently thick) slab. Moreover, our result holds not only for $p=\frac{1}{2}$, but also for all $d \geq 3 $ and $p \in (p_c^{\textrm{site}}(\ZZ^d), 1 - p_c^{\textrm{site}}(\ZZ^d))$, i.e.\@ in the regime where we know that two infinite connected components, of $0$'s and $1$'s, coexist.
Finally we establish a quantitative result, namely a precise estimate on the probability that all words can be seen starting from a large ball.

Loosely speaking, we establish Theorem~\ref{thm:main_strong} by constructing all the words simultaneously, thanks to a renormalization argument. We want to emphasize that our proof uses properties of $\ZZ^d$ which are not really specific to this lattice, so we expect it to be quite robust.

\subsection*{Previous results}

We conclude this introduction by mentioning previous works on percolation of words, and related questions.

First, it was shown in \cite{Benjamini_Kesten_1995} (Theorem~1) that \eqref{eq:all_words} holds for all $d \geq 10$. A few years later, Kesten, Sidoravicius and Zhang \cite{Kesten_Sidoravicius_Zhang_2001} proved that \eqref{eq:all_words} also holds for the close-packed graph $\ZZ_{\textrm{cp}}^2$ of the square lattice, obtained from $\ZZ^2$ by adding diagonal edges to each face (for this graph, $p_c^{\textrm{site}}(\ZZ_{\textrm{cp}}^2) < \frac{1}{2}$).

In addition, it was proved in \cite{Benjamini_Kesten_1995} (Theorem~1, again) that for all $d \geq 40$, all words can be seen from the neighbors of one single vertex, i.e.
$$\PP_{1/2}\bigg(\exists v \in \ZZ^d \: : \: \bigcup_{v' \sim v} S(v') = \Xi \bigg) =1,$$
and the same was proved in \cite{Kesten_Sidoravicius_Zhang_2001} for $\ZZ_{\textrm{cp}}^2$. Of course, one cannot hope to see all words $\xi \in \Xi$ from one single vertex $v$, since the value of $\xi_0$ then has to coincide with $\omega_v$. Note that because of this issue, a slightly different convention is adopted in \cite{Benjamini_Kesten_1995}: the condition in \eqref{eq:see_word} is required to hold only starting from $i=1$.

For percolation of words on $\ZZ^d$, $d \geq 3$, the following result (weaker than \eqref{eq:all_words}) was established in \cite{Hilario_Lima_Nolin_Sidoravicius_2014} (Theorem~2), using a renormalization argument. Let $\Xi_M$ stand for the set of words such that all runs of consecutive $0$'s or $1$'s have a length at least $M$ (we allow in particular such words to be ultimately constant), then for all $p \in (p_c^{\textrm{site}}(\ZZ^d), 1 - p_c^{\textrm{site}}(\ZZ^d))$,
$$\exists M = M(d, p) \:\:\: \text{s.t.} \:\:\: \PP_p(\Xi_M \subseteq S_{\infty}) = 1.$$

Finally, the case of site percolation on the triangular lattice $\TT$ at $p = p_c^{\textrm{site}}(\TT) = \frac{1}{2}$ was studied in~\cite{Kesten_Sidoravicius_Zhang_1998}. In this case, the monochromatic words $(0, 0, \ldots)$ and $(1, 1, \ldots)$ cannot be seen, since percolation does not occur, so we have in particular $\PP_{1/2}(S_{\infty} = \Xi) = 0$. However, one can show that \eqref{eq:almost_all} still holds in this case, i.e.\@ that \emph{almost all} words can be seen:
$$\PP_{1/2}(\mu_{1/2}(S_{\infty}) = 1) = 1.$$
This result answered Open Problem 1 in \cite{Benjamini_Kesten_1995}. In this case where $p = \frac{1}{2}$ is critical, one cannot use Wierman's coupling directly, since the lower bound that it provides is simply $0$. Much more work is required for this result than in the case of $\ZZ^d$ ($d \geq 3$), where $p = \frac{1}{2}$ is supercritical.

\subsection*{Organization of the paper}

In Section \ref{sec:preliminaries}, we first set notation, and then we discuss preliminary properties. In particular, we present Wierman's coupling, which is instrumental in the proof of Theorem~\ref{thm:main}, in combination with a renormalization argument.
In Section \ref{sec:proof_main}, we state and prove a stronger and quantitative version of Theorem~\ref{thm:main}, see Theorem~\ref{thm:main_strong}.
This proof uses several auxiliary results, that are established in subsequent sections. We then proceed in detail with the renormalization procedure in Section \ref{sec:k_blocks}. Finally, the last two sections are devoted to proving two inputs used for Theorem~\ref{thm:main_strong}, namely a strengthened version of Wierman's coupling (Section \ref{sec:Wierman}), and a ``propagation'' lemma for oriented percolation (Section \ref{sec:oriented}). Finally, we present in Section~\ref{sec:open} some open problems that arise naturally from this work.

\paragraph{Acknowledgements}
We are very grateful to  Vladas Sidoravicius for introducing us to this problem.
This research started during a visit of AT to ETH Zurich under the support of the FIM.
During this period, AT has also been supported by grants ``Projeto Universal'' (406250/2016-2) and ``Produtividade em Pesquisa'' (304437/2018-2) from CNPq and ``Jovem Cientista do Nosso Estado'', (202.716/2018) from FAPERJ. The research of VT is supported by NCCR SwissMAP, funded by the Swiss NSF.

\section{Preliminaries} \label{sec:preliminaries}

\subsection{Notation}\label{sec:notation}

Let $d \geq 3$. We work with the hypercubic lattice $\ZZ^d$, whose vertices are the points with integer coordinates, and two vertices  are connected by an edge if and only if they are at a Euclidean distance one from each other. Write $u\sim v$ if $u$ and $v$ are two neighbors.

Bernoulli site percolation on $\ZZ^d$, with parameter $p \in (0, 1)$, can be represented as follows. A percolation configuration is of the form $(\omega_v)_{v \in \ZZ^d}$, where each $\omega_v$ (the state of vertex $v$) is either $0$ or $1$. We denote by $\Omega := \{0, 1\}^{\ZZ^d}$ the set of configurations, and we equip it with the cylindrical $\sigma$-algebra $\calF$. We consider on $\Omega$ the product measure $\PP_p := (p \delta_0 + (1-p) \delta_1)^{\otimes \ZZ^d}$, under which vertices have independently state $0$ or $1$, with respective probabilities $1 - p$ and $p$.
This random coloring defines a partition of the lattice: the vertices can be grouped into connected components, or \emph{clusters}, of $0$-vertices and of $1$-vertices.

As the parameter $p$ varies, Bernoulli site percolation displays a phase transition, i.e.\@ a major change of macroscopic behavior, at a certain critical value $p_c = p_c^{\textrm{site}}(\ZZ^d) \in (0,1)$. For all $p < p_c$, there is $\PP_p$-almost surely no infinite cluster of $1$-vertices, while for all $p > p_c$, there is $\PP_p$-almost surely such an infinite cluster, which moreover turns out to be unique. As mentioned in the introduction, we know from \cite{Campanino_Russo_1985} that $p_c^{\textrm{site}}(\ZZ^d) < \frac{1}{2}$. We thus consider a value $p \in (p_c^{\textrm{site}}(\ZZ^d), 1 - p_c^{\textrm{site}}(\ZZ^d))$, for which infinite clusters of both colors coexist almost surely, and we can assume without loss of generality that $p \in (p_c^{\textrm{site}}(\ZZ^d), \frac{1}{2}]$.

Recall that we denote by $\Xi := \{0,1\}^{\NN}$ the set of infinite words, and (for $v \in \ZZ^d$ and $V \subseteq \ZZ^d$) the notations $S(v)$, $S_V$, $S_{\infty}$ ($\subseteq \Xi$) from the introduction. For $\ell \geq 1$, we also introduce the set $\Xi_{\ell} := \{0,1\}^{\ell}$ of finite words $\xi = (\xi_0, \ldots, \xi_{\ell-1})$ with length $\ell$. For $\xi \in \Xi$, the finite sub-word between indices $i$ and $j$ ($0 \leq i \leq j$) is denoted by $\xi_{[i, j]} := (\xi_i, \ldots, \xi_j)$ ($\in \Xi_{j-i+1}$).

For two vertices $v, v' \in \ZZ^d$ and an infinite word $\xi \in \Xi$, we denote by $v \zgg{\xi} v'$ the event that there exists $n \geq 1$ and a self-avoiding path $v = v_0 \sim v_1 \sim \ldots \sim v_{n-1} = v'$ from $v$ to $v'$ along which the beginning of $\xi$ is seen, i.e.\@ such that $\omega_{v_i} = \xi_i$ for all $i \in \{0, \ldots, n-1\}$. Note that when $\xi$ is the infinite monochromatic word $\wordone := (1, 1, \ldots)$, $v \zgg{\wordone} v'$ refers to the usual $1$-connectedness between vertices. For $\xi \in \Xi$ and $v \in \ZZ^d$, we often use the notation $v \zgg{\xi} \infty$ for the event that $\xi$ is seen from $v$ (i.e.\@ $\xi \in S(v)$). The percolation probability is denoted, as usual, by $\theta(p) := \PP_p(0 \zgg{\wordone} \infty)$.

We also need the following notion of ``exact'' word-connectedness. For a finite word $\xi \in \Xi_{\ell}$ (for some $\ell \geq 1$), we denote by $v \ra{\xi} v'$ the event that there exists a self-avoiding path $v = v_0 \sim v_1 \sim \ldots \sim v_{\ell-1} = v'$ with length $\ell$ along which $\xi$ is seen.

\subsection{Measurability properties} \label{sec:measurability}

In this section, we recall some useful measurability properties that were established in \cite{Benjamini_Kesten_1995}. The most important one is $\{S_{\infty} = \Xi\} \in \calF$, from Proposition 2 of \cite{Benjamini_Kesten_1995}. A zero-one law thus holds for this event, by ergodicity.

Proving this result requires some work, and it is obtained in \cite{Benjamini_Kesten_1995} from an application of the Baire category theorem. This proof actually yields the following stronger property. If $S_{\infty} = \Xi$, i.e.\@ all words are seen somewhere, then there exists a finite subset of vertices $F \subseteq \ZZ^d$ such that all words are seen from $F$:
\begin{equation} \label{eq:all_words_finite}
\big\{ S_{\infty} = \Xi \big\} = \bigcup_{\substack{F \subseteq \ZZ^d\\ |F| < \infty}} \big\{S_F = \Xi \big\}
\end{equation}
(see Remark 1 in \cite{Benjamini_Kesten_1995}).

For values $p \in (p^{\textrm{site}}_c(\mathbb{Z}^d), 1 - p^{\textrm{site}}_c(\mathbb{Z}^d))$, the constructions used for Theorem~\ref{thm:main} do not provide a ``constructive'' proof of measurability for $\{S_{\infty} = \Xi\}$. However, they do show that there exists a measurable subset of $\{S_{\infty} = \Xi\}$, which has full probability, and on which a finite subset of vertices $F$ as in \eqref{eq:all_words_finite} exists (as well as a quantitative statement, see Theorem~\ref{thm:main_strong} below).

\begin{remark}
Note that the following (easier to establish) property holds as well. For every $\xi \in \Xi$, $\{\xi \in S(v)\} \in \calF$ for all $v \in \ZZ^d$, so $\{\xi \in S_{\infty}\} \in \calF$. Hence, a zero-one law follows readily, again by ergodicity:
\begin{equation} \label{eq:zero_one}
\text{for all } \xi \in \Xi, \quad \PP_p(\xi \in S_{\infty}) \in \{0, 1\}
\end{equation}
(see Proposition 3 of \cite{Benjamini_Kesten_1995}).
\end{remark}

\subsection{Wierman's coupling} \label{sec:Wierman_statement}

We are going to use a construction due to Wierman (see \cite{Wierman_1989}, and also \cite{Lima_2008}) to translate high $1$-connectedness in the supercritical regime into high ``$\xi$-connectedness''. It produces the basic building blocks of our renormalization strategy, taking care of reading words at the microscopic level.
A simple way of stating it is the following: for every $\xi \in \Xi$,
$$\PP_p(0 \zgg{\xi} \infty) \geq \PP_p(0 \zgg{\wordone} \infty) = \theta(p) > 0$$
(recall that we assume $p \in (p_c^{\textrm{site}}(\ZZ^d), \frac{1}{2}]$). Hence, we can use \eqref{eq:zero_one} to get
\begin{equation} \label{eq:every_fixed_word}
\text{for all } \xi \in \Xi, \quad \PP_p(\xi \in S_{\infty}) = 1.
\end{equation}
In a similar way as in the introduction, this implies, using Fubini's theorem, that for every $q \in (0,1)$: $\PP_p$-a.s., $\mu_q$-almost every word $\xi \in \Xi$ can be seen, where $\mu_q$ is the product measure $\mu_q := \big( (1-q) \delta_0 + q \delta_1 \big)^{\otimes \NN}$ on $\Xi$.

\begin{remark}
The property \eqref{eq:every_fixed_word} does not imply, in general, that $\PP_p(S_{\infty} = \Xi) = 1$. A counter-example is provided by Theorem~5 of \cite{Benjamini_Kesten_1995}, on a well-chosen tree (see in particular (7.5) and (7.14) in that paper).
\end{remark}

The construction of Wierman's coupling uses an iterative exploration of the $1$-cluster of a given vertex. For any fixed $\xi \in \Xi$, it constructs two coupled configurations $(\omega, \tilde{\omega})$ such that $\omega, \tilde{\omega} \sim \PP_p$, and having the property that for every $y \in \ZZ^d$,
\begin{equation}
  \omega \in \big\{ 0 \zgg{\wordone} y  \big\} \text{ implies that } \tilde{\omega} \in \big\{ 0 \zgg{\xi} y \big\}.
\end{equation}
We will actually use a generalization of this construction, where we replace the single vertex $0$ by a set $S$ equipped with a set of words $(\xi^{(x)})_{x \in S} \in \Xi^S$.

\begin{lemma} \label{lem:Wierman_gen}
Let $\Lambda \subseteq \ZZ^d$, $S \subseteq \Lambda$, and $(\xi^{(x)})_{x \in S} \in \Xi^S$. There exist two coupled random configurations $(\omega, \tilde{\omega})$ such that $\omega, \tilde{\omega} \sim \PP_p$, and having the following property.
$$\text{For all } y \in \Lambda, \quad \omega \in \big\{ \exists x \in S \: : \: x \zgg{\wordone} y \text{ in } \Lambda \big\} \: \text{ implies that } \: \tilde{\omega} \in \big\{ \exists x \in S \: : \: x \zgg{\xi^{(x)}} y \text{ in } \Lambda \big\}.$$
\end{lemma}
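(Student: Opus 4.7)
My plan is to adapt the single-source Wierman coupling to the multi-source setting via a joint BFS exploration from $S$ inside $\Lambda$. The exploration will reveal the pair $(\omega_v, \tilde\omega_v)$ jointly at each visited vertex, using a local coupling whose two marginals are both Bernoulli$(p)$ and which forces $\tilde\omega_v = \xi^{(\sigma(v))}_{d(v)}$ whenever $\omega_v = 1$. Here $\sigma(v) \in S$ and $d(v) \in \NN$ are the source and BFS-depth assigned to $v$ during the exploration (with a deterministic tiebreaking rule coming from a fixed total order on $\ZZ^d$), and $\sigma(x) = x$, $d(x) = 0$ for each $x \in S$. A standard BFS-tree property guarantees that walking backwards along BFS parents yields a self-avoiding path $\sigma(v) = u_0 \sim u_1 \sim \ldots \sim u_{d(v)} = v$ contained in $\Lambda$ with $\sigma(u_i) = \sigma(v)$ and $d(u_i) = i$ throughout.

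At each step, after popping the next active vertex $v$ and setting $\xi := \xi^{(\sigma(v))}_{d(v)}$, I shall sample $(\omega_v, \tilde\omega_v)$ from fresh randomness according to the local law
\begin{equation*}
\xi = 1:\ (0,0) \text{ w.\,p.\ } 1-p,\ (1,1) \text{ w.\,p.\ } p; \qquad \xi = 0:\ (0,0) \text{ w.\,p.\ } 1-2p,\ (0,1) \text{ w.\,p.\ } p,\ (1,0) \text{ w.\,p.\ } p.
\end{equation*}
The second case exploits the standing assumption $p \leq \tfrac12$. Both cases have Bernoulli$(p)$ marginals, and in both cases the event $\omega_v = 1$ forces $\tilde\omega_v = \xi$. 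If $\omega_v = 1$, I enqueue every still unlabeled neighbor $u$ of $v$ in $\Lambda$, assigning $\sigma(u) := \sigma(v)$ and $d(u) := d(v) + 1$; if $\omega_v = 0$, the exploration does not propagate past $v$. When the queue eventually empties, I complete the coupling by drawing independent Bernoulli$(p)$ values for $\omega$ and for $\tilde\omega$ at every remaining vertex of $\ZZ^d$.

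The desired coupling property is then immediate: if $\omega \in \{\exists x \in S : x \zgg{\wordone} y \text{ in } \Lambda\}$, then $y$ lies in the $1$-cluster of $S$ inside $\Lambda$, hence the BFS reaches $y$ and assigns it a source $x' := \sigma(y) \in S$; the corresponding BFS path $x' = u_0 \sim \ldots \sim u_{d(y)} = y$ consists entirely of $1$-vertices in $\Lambda$, so by construction $\tilde\omega_{u_i} = \xi^{(x')}_i$ for every $i$, witnessing $x' \zgg{\xi^{(x')}} y$ in $\tilde\omega$ within $\Lambda$.

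The main obstacle is the verification of the marginals $\omega, \tilde\omega \sim \PP_p$. For $\omega$ this is a routine induction over the exploration steps: each step emits $\omega_v$ with Bernoulli$(p)$ law conditionally on the past, and all unexplored vertices carry fresh i.i.d.\ Bernoulli$(p)$ values. For $\tilde\omega$ the same induction applies in principle, but one must argue with some care that $\tilde\omega_v$ has Bernoulli$(p)$ law conditionally on the past $\sigma$-algebra, even though the choice of which vertex to explore next depends on $\omega$ (and, through the joint coupling, on $\tilde\omega$ itself). This is precisely the key calculation underlying the classical single-source Wierman coupling; since our local joint law has the correct marginals at every step, the generalization to multiple sources and vertex-dependent words requires no essential new idea and is obtained simply by iterating the single-step analysis along the exploration.
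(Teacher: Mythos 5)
Your proposal is correct and is essentially the paper's own argument: an exploration of the $1$-cluster of $S$ in $\Lambda$ that grows a forest rooted at the sources, samples the pair $(\omega_v,\tilde\omega_v)$ at each newly explored vertex from exactly the same three-case local law (using $p\le\tfrac12$), reads $\xi^{(x)}$ along forest branches, and fills in undiscovered vertices independently. The only differences are cosmetic (a BFS queue with source/depth labels instead of the paper's ``minimum unexplored neighbor'' rule and forest distance), and your deferral of the marginal-law verification to the standard previsible-exploration induction matches the level of detail in the paper.
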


We postpone the proof of Lemma~\ref{lem:Wierman_gen} to Section~\ref{sec:Wierman}.

\section{Main result} \label{sec:proof_main}

In this section we state our main contribution, Theorem~\ref{thm:main_strong}, and we prove it assuming an auxiliary result, Proposition \ref{prop:En}, that will be established in Section \ref{sec:k_blocks}.

\subsection{Statement}

\nc{c:surface_1}

\begin{theorem} \label{thm:main_strong}
Let $d \geq 3$, and $p \in (p_c^{\textrm{site}}(\ZZ^d), 1 - p_c^{\textrm{site}}(\ZZ^d))$.
\begin{itemize}
\item[1.] We have
\begin{equation} \label{eq:main_strong1}
\PP_p (S_\infty = \Xi) = 1.
\end{equation}
Moreover, there exists $l$ ($= l(d,p)$) such that $\PP_p$-a.s., all words can be seen in the slab $\mathbb{Z}^2 \times [0, l]^{d - 2}$.

\item[2.] The following quantitative statement holds, where $B_m(0) := [-m, m]^d$ is the ball with radius $m$ (for the norm $\|.\|_{\infty}$) around $0$ in $\ZZ^d$. There exists a constant $\uc{c:surface_1} = \uc{c:surface_1}(d, p) > 0$ such that:
\begin{equation} \label{eq:main_strong2}
\text{for all } m \geq 1, \quad \PP_p \big( S_{B_m(0)} = \Xi \big) \geq 1 - e^{- \uc{c:surface_1} m^{d - 1}}.
\end{equation}
\end{itemize}
\end{theorem}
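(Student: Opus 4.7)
The plan is to derive both parts of Theorem~\ref{thm:main_strong} from the quantitative renormalization estimate of Proposition~\ref{prop:En}, together with Wierman's coupling (Lemma~\ref{lem:Wierman_gen}) and the measurability/ergodicity results recalled in Section~\ref{sec:measurability}. Since $\{S_\infty = \Xi\}$ is measurable and invariant under translations, the ergodic zero-one law reduces \eqref{eq:main_strong1} to showing that it has positive probability; the quantitative bound \eqref{eq:main_strong2} will therefore formally imply \eqref{eq:main_strong1}, and the slab statement will come essentially for free from the construction.

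The renormalization should operate on geometric scales $L_k = L_0 L^k$. On each block $B$ of side $L_k$, I expect Proposition~\ref{prop:En} to control a ``$k$-good'' event $E_k(B)$ whose rough meaning is: from every vertex in some designated seed subset of $B$, and for each binary word of length proportional to $L_k$, one can trace a self-avoiding path in $B$ that reads the word and lands in a $(k-1)$-good sub-block at scale $L_{k-1}$, moreover doing so for \emph{both} possible values of the next letter. Proposition~\ref{prop:En} would then supply a surface-order bound of the form $\PP_p(E_k(B) \text{ fails}) \leq e^{-c L_k^{d-1}}$, to be proved inductively by combining (a) Lemma~\ref{lem:Wierman_gen} to upgrade supercritical $1$-connections inside $B$ into $\xi$-connections between seeds of $(k-1)$-good sub-blocks, (b) standard surface-order estimates for supercritical Bernoulli percolation, and (c) the oriented-percolation propagation lemma (Section~\ref{sec:oriented}) to guarantee that both branches survive.

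Given this recursive good event, the derivation of Theorem~\ref{thm:main_strong} is clean. Unwinding the nested definition, on $E_k(B)$ every $\xi \in \Xi$ has its prefix of length $\sim L_k$ read along a self-avoiding path starting in $B$ that ends in a $(k-1)$-good sub-block matching the next letter of $\xi$; iterating through all lower scales yields an infinite self-avoiding $\xi$-path, so $E_k(B) \subseteq \{S_B = \Xi\}$. To prove \eqref{eq:main_strong2}, I would pick $k$ with $L_k \leq m < L_{k+1}$, cover $B_m(0)$ (intersected with the slab, if needed for part~1) by such a block $B$, and write
\begin{equation*}
\PP_p\bigl(S_{B_m(0)} \neq \Xi\bigr) \leq \PP_p\bigl(E_k(B) \text{ fails}\bigr) \leq e^{-c L_k^{d-1}} \leq e^{-\uc{c:surface_1} m^{d-1}}.
\end{equation*}
The slab version in part~1 follows because the blocks at every scale can be fit inside $\ZZ^2 \times [0,l]^{d-2}$ for a sufficiently large but fixed $l = l(d,p)$: only two unbounded directions are needed for the branching, provided the transverse size is large enough for the base-case good event to have positive probability.

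The main obstacle is not this piece of combinatorial plumbing but the design and inductive verification of $E_k$, that is, Proposition~\ref{prop:En} itself: the event must simultaneously (i) branch at each scale into two viable alternatives, in order to cover all of $\Xi$ and not merely words with long runs as in \cite{Hilario_Lima_Nolin_Sidoravicius_2014}; (ii) be verifiable inside a single scale-$k$ block with surface-order failure probability, so that the bound survives iteration; and (iii) compose consistently across scales, with seeds at scale $k$ matching seeds at scale $k-1$. The role of the present section is only to check that, modulo Proposition~\ref{prop:En}, all these ingredients fit together to yield the two claims of Theorem~\ref{thm:main_strong}.
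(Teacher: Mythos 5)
Your derivation does not actually use Proposition~\ref{prop:En} as it is stated in the paper; it silently replaces it by a different, much stronger statement, and that substitution hides the main difficulty. The paper's Proposition~\ref{prop:En} is a \emph{per-word} estimate in a slab of thickness $h$: for each fixed $\xi$, $\PP_p^h\big(E_m^n(\xi)\setminus E_m^{2n}(\xi)\big)\le e^{-chn}$, where $E_m^n(\xi)$ only asks that a positive density of $\partial\Lambda_n$ be reached from $\partial\Lambda_{m+1}$ while reading a prefix of $\xi$ of length at most $\Ct n$. It says nothing about ``branching into both values of the next letter'' or about covering all of $\Xi$ inside one block, and its failure probability is $e^{-chn}$, not the surface-order $e^{-cL_k^{d-1}}$ you posit for your hypothesized events $E_k(B)$. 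With the actual proposition in hand, the step you must supply --- and which your proposal omits --- is how to pass from a statement about one word to a statement about \emph{all} (uncountably many) words simultaneously. The paper's mechanism is precisely this: the event $E_m^n(\xi)\setminus E_m^{2n}(\xi)$ depends only on the first $2\Ct n$ letters of $\xi$ (because of the constraint $t_x\le \Ct n$ built into the seeds), so one can take a union bound over the $2^{2\Ct n}$ relevant prefixes, and then choose the slab thickness $h$ large enough that $e^{-chn}$ beats this entropy, yielding $\sum_{n\ge m}2^{2\Ct n}e^{-chn}\le e^{-m}$; a diagonal argument then upgrades $\bigcap_j E_m^{2^j m}(\xi)$ into an infinite $\xi$-path for every $\xi$ at once. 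Your plan pushes all of this (the branching, the uniformity over words, the surface-order bound) into the unproved block event, i.e.\ into a proposition that is not the one the paper provides, so as a conditional proof of Theorem~\ref{thm:main_strong} given Proposition~\ref{prop:En} it has a genuine gap.

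The quantitative part has the same problem in a second guise: you assert $\PP_p(E_k(B)\text{ fails})\le e^{-cL_k^{d-1}}$ and read off \eqref{eq:main_strong2} directly, but nothing of that strength is available from Proposition~\ref{prop:En}, whose bound in a single slab only yields (after the entropy computation) a failure probability of order $e^{-m}$ per slab. The exponent $m^{d-1}$ in \eqref{eq:main_strong2} is obtained differently: one slices $B_m(0)$ into of order $m^{d-2}$ disjoint slabs of fixed thickness $h_0k$, each containing an isomorphic copy of $\Lambda_m$, and uses independence of these $\sim m^{d-2}$ attempts, each failing with probability at most $e^{-m}$. That stacking argument (together with the fixed-thickness slab statement of part~1, with $l=h_0k$) is what produces $e^{-\uc{c:surface_1} m^{d-1}}$; your single-block covering argument would require exactly the surface-order, all-words block estimate that the paper never proves and does not need.
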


Note that \eqref{eq:main_strong1} follows immediately from \eqref{eq:main_strong2}, by letting $m \to \infty$. Also, an analogous upper bound clearly holds in \eqref{eq:main_strong2}, since it is known to be the order of magnitude (up to a different choice of $c_0$) for $\PP_p ( B_m(0) \zgg{\smash{\wordone}} \infty )$.

\begin{remark}
There is nothing special about the fact that we are using two colors. For example, let $I \geq 3$ and consider the process (still in dimension $d \geq 3$) where the vertices are colored independently $0, \ldots, I-1$, with respective probabilities $p_0, \ldots, p_{I-1} \in [0,1]$ ($p_0 + \ldots + p_{I-1} = 1$). If all these parameters are chosen to be strictly larger than $p_c^{\textrm{site}}(\ZZ^d)$ (which is possible if $d$ is large enough), then Wierman's coupling can easily be adapted, and a result analogous to Theorem \ref{thm:main_strong} holds. These parameters $(p_i)_{0 \leq i \leq I-1}$ do not even need to be space-homogeneous: if they are uniformly bounded from below by $p_c^{\textrm{site}}(\ZZ^d) + \ve$, for some $\ve > 0$, then the same arguments work. Finally, let us mention that the methods in our proof of Theorem \ref{thm:main_strong} could also be used to construct point-to-point $\xi$-connections.
\end{remark}

\subsection{Proof of Theorem \ref{thm:main_strong}} \label{sec:proof_main_strong}
\newcommand {\Ct}{C}

For some given $\delta > 0$ and $k \geq 1$, which we explain how to choose in Proposition~\ref{prop:En} below, we write $\Ct := \big| [-k,k]^d \big| = (2k+1)^d$.
In the slab
$${\mathbb S}_h := \ZZ^2 \times (0, hk] \times (-k, k]^{d-3} \subseteq \ZZ^d \quad (h \geq 2),$$
we consider the finite box
$$\Lambda_n := [-kn, kn]^2 \times (0, hk] \times (-k, k]^{d-3} \quad(n\ge1),$$
as well as its inner vertex boundary $\partial \Lambda_n$ with respect to the slab, i.e.
$$\partial \Lambda_n := \big\{ x \in \Lambda_n \: : \: x \sim y \text{ for some } y \in {\mathbb S}_h \setminus \Lambda_n \big\}$$
(see Figure~\ref{fig:partial_lambda}).

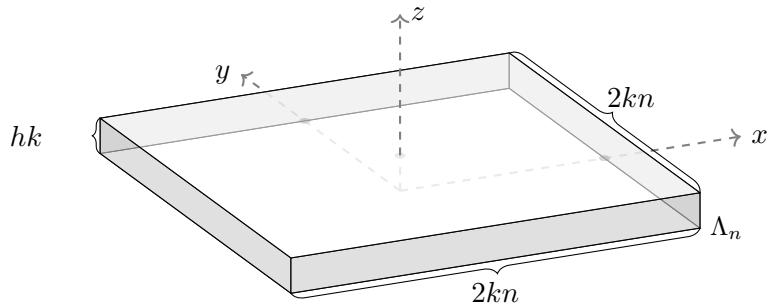
\begin{figure}[h]
  \centering
  \tdplotsetmaincoords{70}{110}
  \begin{tikzpicture}[scale=.5,tdplot_main_coords]
    \tdplotsetrotatedcoords{0}{0}{135}
    \begin{scope}[tdplot_rotated_coords]
      \draw[color=gray,dashed,thick, ->] (0, 0, 0) -- (10, 0, 0) node[right,color=black]{$x$};
      \draw[color=gray,dashed,thick, ->] (0, 0, 0) -- (0, 10, 0) node[left,color=black]{$y$};
      \draw[color=black,very thick] (6, 0, 0) circle (.1);
      \draw[color=black,very thick] (0, 6, 0) circle (.1);
      \draw[color=gray,very thick] (0, 0, 1) circle (.1);
      \draw[color=gray,dashed,thick] (0, 0, 0) -- (0, 0, 1);
      \draw[fill=white, fill opacity=.6] (-6,6,0) -- (6,6,0) -- (6,-6,0) -- (-6,-6,0) -- cycle;
      \draw[fill=gray!40!white, fill opacity=.6] (-6,6,0) -- (6,6,0) -- (6,6,1) -- (-6,6,1) -- cycle;
      \draw[fill=gray!40!white, fill opacity=.6] (-6,-6,0) -- (-6,6,0) -- (-6,6,1) -- (-6,-6,1) -- cycle;
      \draw[fill=gray!40!white, fill opacity=.6] (6,-6,0) -- (6,6,0) -- (6,6,1) -- (6,-6,1) -- cycle;
      \draw[fill=gray!40!white, fill opacity=.6] (-6,-6,0) -- (6,-6,0) -- (6,-6,1) -- (-6,-6,1) -- cycle;
      \draw[fill=white, fill opacity=.6] (-6,6,1) -- (6,6,1) -- (6,-6,1) -- (-6,-6,1) -- cycle;
      \draw[color=gray,dashed,thick, ->] (0, 0, 1) -- (0, 0, 5) node[right,color=black]{$z$};
      \draw[decorate,decoration={brace,mirror,amplitude=5pt}] (-6, -6, 0) -- (6, -6, 0) node [black, midway, yshift=-12pt] {$2kn$};
      \draw[decorate,decoration={brace,mirror,amplitude=5pt}] (6,-6, 1) -- (6,6, 1) node [black, midway, yshift=10pt, xshift=10pt] {$2kn$};
      \draw[decorate,decoration={brace,mirror,amplitude=3pt}] (-6,6, 1) -- (-6,6, 0) node [black, midway, xshift=-28pt] {$hk$};
      \node[right] at (6, -6, 0) {\small $\Lambda_n$};
    \end{scope}
  \end{tikzpicture}
  \caption{\label{fig:partial_lambda} Illustration of the box $\Lambda_n$ for $d = 3$, with $\partial \Lambda_n$ marked in transparent gray.}
\end{figure}

For any given $n > m \geq 1$ and $\xi \in \Xi$, we introduce the event
\begin{equation} \label{eq:5}
  E_m^n(\xi) :=
  \Bigg\{
  \begin{array}{c}
    \exists T \subseteq \partial \Lambda_n, \text{ with } |T| \geq 8 \delta \cdot |\partial \Lambda_n|, \text{ such that: for all } y \in T,\\
    \text{there exists } x \in \partial \Lambda_{m + 1} \text{ with } x \lra{\xi_{[0,t_x]}} y \text{ for some } t_x \leq \Ct n
  \end{array}
  \Bigg\}.
\end{equation}
When $n = m$, we define $E_m^n(\xi)$ to be the whole probability space.

The constants $\delta$ and $k$ are chosen to ensure the existence of microscopic connections as described in Proposition~\ref{prop:En} below, and we think of them as fixed.
The role of $h$ (thickness of the slab) is explained by the computation in the proof of Theorem~\ref{thm:main_strong} below. We denote by $\PP_p^h$ the Bernoulli site percolation measure  on the slab ${\mathbb S}_h$.

\begin{proposition} \label{prop:En}
  For every $p \in (p_c^{\textrm{site}}(\ZZ^d), 1 - p_c^{\textrm{site}}(\ZZ^d))$, there exist $\delta,c > 0$ (small enough) and $k \geq 1$ (large enough) such that the following holds.
  For every $n \geq m \geq 1$, $h \geq 2$  and $\xi \in \Xi$,
\begin{equation}
  \PP_p^h \big( E_m^n(\xi) \setminus E_m^{2n}(\xi) \big) \leq e^{- c h n}.
\end{equation}
\end{proposition}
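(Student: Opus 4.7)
\textbf{The plan is to decouple the argument into a monochromatic propagation step in an annular slab region, followed by a word-encoding step via Wierman's coupling.} Expose the restriction $\omega|_{\Lambda_n}$; this reveals whether $E_m^n(\xi)$ occurs and, if so, exhibits the set $T\subseteq\partial\Lambda_n$ and the associated lengths $t_y\leq \Ct n$ (when $n=m$ the event $E_m^n(\xi)$ is automatic and we instead take as our starting set the whole of $\partial\Lambda_{m+1}$). The randomness left in the annular region $A:=\Lambda_{2n}\setminus\Lambda_n$ is independent and distributed as $\PP_p$. Our task is to show that, conditionally on the exposure and on $|T|\geq 8\delta\cdot|\partial\Lambda_n|$, with probability at least $1-e^{-chn}$ there exists $T'\subseteq\partial\Lambda_{2n}$ with $|T'|\geq 8\delta\cdot|\partial\Lambda_{2n}|$ such that each $y'\in T'$ is reached by a $\xi$-path from $\partial\Lambda_{m+1}$ of length at most $\Ct\cdot 2n$.

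\textbf{Step 1 (monochromatic propagation).} Ignoring the word for a moment, one shows that the random set $T' := \{y'\in\partial\Lambda_{2n}\,:\, y'\zgg{\wordone} T \text{ in } A \text{ via a path of length}\leq C'n\}$ satisfies $|T'|\geq 8\delta\cdot|\partial\Lambda_{2n}|$ with probability at least $1-e^{-chn}$. This is a statement purely about supercritical Bernoulli site percolation in a slab, and is precisely the content of the ``propagation'' lemma for oriented percolation postponed to Section~\ref{sec:oriented}. The exponent $chn$ (rather than $cn$) reflects that $|\partial\Lambda_{2n}|$ is of order $hn$: a renormalization of the supercritical slab into a highly supercritical oriented percolation model produces $\Theta(hn)$ independent columns across $\partial\Lambda_{2n}$, each of which extends $T$ with positive probability, so a Chernoff/Peierls bound yields the required tail.

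\textbf{Step 2 (passing to $\xi$-connections via Wierman).} We invoke Lemma~\ref{lem:Wierman_gen} inside $A$, with $S:=T$ (taking, if needed, the adjacent vertices of $T$ inside $A$ as starting points, which changes the path length by a constant) and with the family of suffix-words $\xi^{(y)}:=\xi_{[t_y,\infty)}$. The lemma produces a coupling $(\omega|_A,\tilde\omega|_A)$ with $\tilde\omega|_A\sim \PP_p$, under which every $\wordone$-path from some $y\in T$ to some $y'\in\partial\Lambda_{2n}$ in $\omega|_A$ becomes a $\xi^{(y)}$-path in $\tilde\omega|_A$. Substituting $\tilde\omega$ for the annular part of the configuration preserves the joint law, and concatenating the $\xi_{[0,t_y]}$-path from $\partial\Lambda_{m+1}$ to $y$ (living inside $\Lambda_n$) with the corresponding $\xi_{[t_y,\cdot]}$-path from $y$ to $y'$ (living inside $A$), the indices align at the interface $y\in\partial\Lambda_n$ because $\xi^{(y)}_0=\xi_{t_y}=\omega_y=\tilde\omega_y$. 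The overall length is bounded by $t_y+C'n\leq \Ct n+C'n\leq \Ct\cdot 2n$, provided $k$ (and hence $\Ct=(2k+1)^d$) has been chosen large enough to dominate the $C'$ coming from Step~1.

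\textbf{Main obstacle.} The entire difficulty sits in Step~1: getting a failure probability as small as $e^{-chn}$ requires exploiting the $h$-thick slab geometry to cut the annulus into order $hn$ independent columns and running a static block renormalization that turns supercritical slab percolation into highly supercritical oriented percolation; the Wierman step is a structural translation which is essentially free once one is careful with index alignment at $\partial\Lambda_n$. Combining the two steps and noting that the exposure on $\Lambda_n$ can be done deterministically (we just need the bound uniform in the outcome) yields the proposition.
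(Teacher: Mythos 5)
Your overall strategy (renormalize to propagate a dense set of endpoints, use Wierman's coupling to encode the word) has the same flavour as the paper's, but the way you split it creates a genuine gap at Step~2. Lemma~\ref{lem:Wierman_gen} carries no information about path lengths: in the coupling, the word $\xi^{(x)}$ is read along a branch of the exploration spanning forest, and the depth of a vertex in that forest is bounded only by the number of explored vertices, i.e.\@ by the volume of the region on which the coupling is run --- not by the length of the $\wordone$-path you found in $\omega$. Applying the lemma once to the whole annulus $\Lambda_{2n}\setminus\Lambda_n$ therefore only yields word indices $t'_{y}$ of order $h k^{d} n^{2}$, and your inequality ``overall length $\le t_y+C'n\le 2Cn$'' is unjustified. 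This is not a cosmetic point: the bound $t'_y\le 2Cn$ is part of the definition of $E_m^{2n}(\xi)$, and it is exactly what makes $E_m^{n}(\xi)\setminus E_m^{2n}(\xi)$ depend only on the first $2Cn$ letters of $\xi$, which the union bound over $\Xi_{2Cn}$ in the proof of Theorem~\ref{thm:main_strong} requires (this is the ``entropy'' created by the words that the paper insists on controlling). The paper's proof is organized precisely to avoid your problem: Wierman's coupling is invoked only inside single blocks $F^{\mathsf u}\cup B^{\mathsf u}$ of volume at most $C$ (Proposition~\ref{prop:k_blocks}), the word index is propagated through the seed times $t_x\le C\,\mathsf u_1$, and the block successes drive an oriented exploration of the macroscopic slab (Lemma~\ref{lem:criterionPercolation} together with Proposition~\ref{prop:oriented}); the orientation forces any exploration path to traverse only $O(n)$ blocks, which is where the bound $t'_y\le 2Cn$ actually comes from.

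Two further remarks. First, your Step~1 is not ``precisely the content'' of Proposition~\ref{prop:oriented}: that proposition concerns oriented site percolation on the renormalized graph $\overrightarrow{\mathsf{Slab}}_h$, and converting it into the microscopic statement you need (density lower bound on $\partial\Lambda_{2n}$, length bound $C'n$, error $e^{-chn}$, uniformly over the exposed data) is itself the block construction and exploration of Section~\ref{sec:k_blocks}; so even the monochromatic half of your plan already requires essentially the paper's machinery, and once you have it, the block-by-block application of Wierman comes for free. Second, one could imagine repairing your Step~2 by a breadth-first variant of the coupling in which the word index at a vertex equals its chemical distance to $T$, combined with a chemical-distance estimate in the slab with density control and error $e^{-chn}$; but that is a nontrivial additional input which neither the paper uses nor your proposal supplies. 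The interface bookkeeping at $\partial\Lambda_n$ (whether the continuation reads $\xi_{t_y}$ or $\xi_{t_y+1}$, starting from neighbours of $T$ in the annulus) is minor and fixable.
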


Proposition~\ref{prop:En} is really the main technical result of this paper, and its proof is postponed to the next section. Assuming its validity, our main result follows easily, as we explain now.

\begin{proof}[Proof of Theorem~\ref{thm:main_strong}]
  Pick $k$ and $\delta$ as in Proposition~\ref{prop:En} and fix some integer $m$.
  For $n = m$, the event $E_m^n(\xi)$ was defined to be the whole space, therefore $\PP_p^h(E_m^n(\xi)) = 1$.

  We now observe that for every word $\xi \in \Xi$, the event $\bigcap_{j \geq 1} E_m^{2^j m}(\xi)$ implies in particular that $\partial \Lambda_{m + 1} \zgg{\smash{\xi}} \partial \Lambda_{2^j m}$ for all $j \geq 1$, from which it follows (as one can easily convince oneself, using a diagonal argument) that $\partial \Lambda_{m + 1} \zgg{\xi} \infty$.
  We can thus write
  \begin{align*}
    \PP_p^h \Big( \exists \xi \in \Xi \text{ such that } \partial \Lambda_{m + 1} \nzgg{\xi} \infty \Big)
    & \leq \PP_p^h \Big( \bigcup_{n \geq m} \bigcup_{\xi \in \Xi_{2\Ct n}} \big( E_m^n(\xi) \setminus E_m^{2n}(\xi) \big) \Big)\\
    & \leq \sum_{n \geq m} \ \sum_{\xi \in \Xi_{2\Ct n}} \PP_p^h \big( E_m^n(\xi) \setminus E_m^{2n}(\xi) \big),
  \end{align*}
  where we used the fact that the event $E_m^n(\xi) \setminus E_m^{2n}(\xi)$ only depends on the first $2 \Ct  n$ coordinates of the word $\xi$ (this is a key observation in order to apply the union bound).
  Now, it follows from Proposition \ref{prop:En} that
  \begin{equation}
    \PP_p^h \Big( \exists \xi \in \Xi \text{ such that } \partial \Lambda_{m + 1} \nzgg{\xi} \infty \Big) \leq \sum_{n \geq m} 2^{2\Ct n} \cdot  e^{- c h  n},
  \end{equation}
  and we can thus choose $h_0$ large enough so that the latter probability is at most $e^{- m}$.
  By taking the $m \to \infty$ limit, we obtain part 1 of the statement, with $l = h_0 k$.

Let us now turn to part 2, the quantitative statement. For that, it is enough to observe that by ``slicing up'' the box $B_m(0)$, we can find at least $c' m^{d - 2}$ disjoint slabs, of height $h_0 k$, whose intersection with this box is a graph isomorphic to $\Lambda_m$, for some constant $c' > 0$. Hence, we have these many independent attempts to see all words starting from $B_m(0)$ in a slab. Since each of these attempts fails with a probability at most $e^{- m}$, from the proof of part 1, we obtain the desired lower bound~\eqref{eq:main_strong2}.
\end{proof}

\section{Proof of Proposition~\ref{prop:En}} \label{sec:k_blocks}

In this section, we prove Proposition~\ref{prop:En}, which was used in the proof of Theorem~\ref{thm:main_strong}. For that, we use two auxiliary results that are established later: Lemma \ref{lem:Wierman_gen} and Proposition \ref{prop:oriented}, proved in Sections~\ref{sec:Wierman} and \ref{sec:oriented} respectively.

\subsection{Renormalization setting} \label{sec:renorm-sett}

\newcommand{\defn}{\textit}

 We will use a dynamic renormalization procedure similar to the standard one presented in \cite[Section 7.2]{Grimmett_book}. Before describing this process, we introduce the necessary geometric framework. For a given word $\xi \in \Xi$, the idea is to construct a ``$\xi$-cluster'' in the slab $\mathbb S_h$ by propagating ``seeds'', i.e.\@ sets which have a reasonably good probability to be $\xi$-connected to infinity, from boxes to boxes (in an oriented way) using local connections. Let $k$ be a fixed even integer.

For $h \geq 1$, we define the (oriented) renormalization graph $\overrightarrow{\mathsf{Slab}}_h = (\mathsf V, \vec{\mathsf E})$, where
\begin{equation} \label{eq:vertices_slab}
\mathsf V := \Big\{ \mathsf{v} = (\mathsf{v}_1, \mathsf{v}_2,\mathsf{v}_3) \in \ZZ^3 \: : \: 0 < \mathsf{v}_3 < h, \: \mathsf{v}_1 \in 2 \ZZ, \: \frac{\mathsf{v}_1}{2} + \mathsf{v}_2 \in 2 \ZZ \text{ and } \frac{\mathsf{v}_1}{2} + \mathsf{v}_3 \in 2 \ZZ \Big\}
\end{equation}
($\subset \ZZ^2 \times (0, h)$), equipped with the oriented edges
\begin{equation} \label{eq:oriented_edges}
\vec{\mathsf u\mathsf v} \in \vec{\mathsf E} \quad \text{if and only if} \quad \mathsf v \in \mathsf u + \big\{ (2, \ve, \ve') \: : \: \ve, \ve' \in \{ \pm 1\} \big\}
\end{equation}
(see Figure~\ref{fig:renormalization}). We explore an oriented site percolation on this graph, that will later be coupled with our microscopic process.

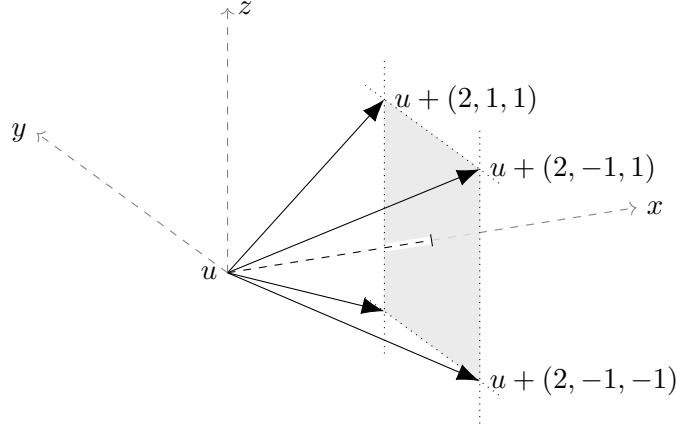
\begin{figure}[h]
  \centering
  \tdplotsetmaincoords{70}{110}
  \begin{tikzpicture}[scale=1.5,tdplot_main_coords]
    \tdplotsetrotatedcoords{0}{0}{135}
    \begin{scope}[tdplot_rotated_coords]
      \draw[color=gray, ->,dashed] (2, 0, 0) -- (4, 0, 0) node[right,color=black]{$x$};
      \draw[fill,color=gray!20,opacity=.8] (2, 1, 1) -- (2, -1, 1) -- (2, -1, -1) -- (2, 1, -1) -- cycle;
      \draw[line width=3pt,color=white] (0, 0, 0) -- (2, 0, 0);
      \draw[color=black,-|,dashed] (0, 0, 0) -- (2, 0, 0);
      \draw[color=gray, ->,dashed] (0, 0, 0) -- (0, 4, 0) node[left,color=black]{$y$};
      \draw[-{Latex[length=3mm]}] (0, 0, 0) -- (2, 1, 1) node[right] {$u + (2, 1, 1)$};
      \draw[-{Latex[length=3mm]}] (0, 0, 0) -- (2, 1, -1);
      \draw[-{Latex[length=3mm]}] (0, 0, 0) -- (2, -1, 1) node[right] {$u + (2, -1, 1)$};
      \draw[-{Latex[length=3mm]}] (0, 0, 0) -- (2, -1, -1) node[right] {$u + (2, -1, -1)$};
      \draw[color=gray, ->,dashed] (0, 0, 0) -- (0, 0, 2.5) node[right,color=black]{$z$};
      \draw[dotted] (2,  1, -1.4) -- (2,  1, 1.4);
      \draw[dotted] (2, -1, -1.4) -- (2, -1, 1.4);
      \draw[dotted] (2, -1.4, 1) -- (2,  1.4, 1);
      \draw[dotted] (2, -1.4, -1) -- (2, 1.4, -1);
      \node[left] at (0, 0, 0) {$u$};
    \end{scope}
  \end{tikzpicture}
  \caption{
    We consider the renormalization graph $\protect\overrightarrow{\mathsf{Slab}}_h$, where each vertex $u$ is connected to the (at most four) vertices $u + \{ (2, \pm 1, \pm 1) \}$.
  }
  \label{fig:renormalization}
\end{figure}

For ${\mathsf U} \subseteq {\mathsf V}$, we define
\begin{equation}
  \mathsf{U}^+ := \bigcup_{\mathsf u\in \mathsf U} \Big\{ \mathsf v \in {\mathsf V} \: :\:{\mathsf u} {\mathsf v}\in\vec {\mathsf E} \Big\} \quad \text{and} \quad \partial^+ \mathsf U := \mathsf U^+\setminus \mathsf U.
\end{equation}
We see the graph $\overrightarrow{\mathsf{Slab}}_h$ as a renormalized version of the slab ${\mathbb S}_h$ ($= {\mathbb Z}^2 \times (0, hk] \times (-k, k]^{d-3}$) introduced in Section \ref{sec:proof_main_strong}, scaled by a factor $\frac{1}{2k}$. We call $\overrightarrow{\mathsf{Slab}}_h$ and $\mathbb{S}_h$ the \emph{macroscopic} and \emph{microscopic} slabs, respectively. Note that the macroscopic graph is oriented, while the microscopic graph is not. In order to distinguish between the two graphs, we use the following typographic convention. For vertices in $\mathbb{S}_h$, we write $x, y, z, \ldots$, and subsets of vertices are denoted by $A, B, C, \ldots$ For vertices in $\overrightarrow{\mathsf{Slab}}_h$, we write ${\mathsf u}, {\mathsf v}, {\mathsf w}, \ldots$, and subsets of vertices are denoted by ${\mathsf A}, {\mathsf B}, {\mathsf C}, \ldots$

In order to carry out the renormalization procedure described in the beginning of the section, we now define \emph{boxes}, \emph{seeds}, and \emph{good local connections}.

\begin{description}
\item[Boxes:] With every vertex $\mathsf u\in\mathsf V$ in the macroscopic slab, we associate two subsets of the microscopic slab, a box $B^{\mathsf u} $ and a face $F^{\mathsf u}$, defined as follows. Set $B := (-k,k]^d$, $F := \{-k\}\times (-k,k]^{d - 1}$, and for every $\mathsf u\in\mathsf V$,
  \begin{equation} \label{eq:14}
    B^{\mathsf u} := k.\mathsf u+B \quad \text{and} \quad F^{\mathsf u} := k.\mathsf u+F
  \end{equation}
  (we see $k.\mathsf u $ as an element of $\mathbb Z^d$ by identifying $\mathbb Z^3$ and $\mathbb Z^3\times\{0_{\mathbb Z^{d-3}}\}$).
  By definition, $F^{\mathsf u} $ is disjoint from $B^{\mathsf u}$. Note also that the boxes $B_{\mathsf u}$ are disjoint from each other for different values of $\mathsf{u}$, as illustrated in Figure~\ref{fig:five_boxes}.

\item[Seeds:] For $\delta \in (0, 1)$, a \defn{$\delta$-seed} for ${\mathsf u}=({\mathsf u}_1,{\mathsf u}_2,{\mathsf u}_3)$ is a pair $(S,t)$ where
  \begin{itemize}
  \item $S \subseteq F^{\mathsf u}$, with $|S| \ge \delta |F^{\mathsf{u}}|$,
  \item and $t=(t_x)_{x\in S}$ is a collection of nonnegative integers such that $t_x\le \Ct {\mathsf u}_1$ for every $x \in S$.
    Informally speaking, one can think of $t_x$ as encoding the time at which we arrived at $x$, when reading the word $\xi$.
  \end{itemize}
  We write $\mathscr S_\delta^{\mathsf u}$ for the set of all $\delta$-seeds for $\mathsf u$.

  Intuitively, we will use seeds as follows. Starting from a given subset $L \subseteq \mathbb Z^d$, we want to explore the vertices that can be reached while reading the word $\xi$, which we do by examining the boxes $B^{\mathsf u}$ one after the other.

  Assume that at some given step, the exploration reaches the box $B^{\mathsf u}$.
  At that moment, we can read the word $\xi$ from $L$ to every point in a set $S$ on the face $F^{\mathsf u}$ without using $B^{\mathsf u}$, and we now want to extend the exploration inside this box.
  A \emph{seed} is simply a set $S$ that is sufficiently large to ensure that the exploration has a good probability to continue within that box.

  Notice that knowing the set $S$ alone is not sufficient if one wants to continue the exploration in $B^{\mathsf u}$.
  When we read the word $\xi$ from $L$ to a vertex $x$ in $S$, one needs to keep track of where we stand along $\xi$, i.e.\@ at which index $t_x$, when the exploration reaches $x$.
We bring the reader's attention to the condition $t_x \le \Ct \mathsf{u}_1$ in the definition of a seed, which will play an important role: we want to keep track of the length of the path connecting $L$ to $x$, so that we can control the ``entropy'' created by the words.

\item[Good local connections:] As explained above, we want to propagate an exploration inside a given box $B^{\mathsf u}$.
  We say that this step succeeds when the event below occurs. Let $\delta \in (0, \frac{1}{64000})$. Given a word $\xi \in \Xi$ and a $\delta$-seed $(S^{\mathsf u},t^{\mathsf u})$ for $\mathsf u$, we define the ``good event''
  \begin{equation}
    \label{eq:6}
    G_{\xi}^{\mathsf u}(S^{\mathsf u},t^{\mathsf u}) := \bigcap_{\mathsf v \in \{\mathsf u\}^+ } \Big \{ \exists (S^{\mathsf v},t^{\mathsf v})\in \mathscr S_{64000\delta}^{\mathsf v} \: :\:   \forall y\in S^{\mathsf v}, \: \exists x\in S^{\mathsf u} \text{ with } x \lra{\xi_{[t^{\mathsf u}_x,t^{\mathsf v}_y]}} y \text{ inside $F^{\mathsf u}\cup B^{\mathsf u}$} \Big\}
  \end{equation}
  (see Figure~\ref{fig:five_boxes} for an illustration of the boxes involved).
\end{description}

\begin{figure}[ht]
  \centering
  \tdplotsetmaincoords{70}{110}
  \begin{tikzpicture}[scale=1,tdplot_main_coords]
    \tdplotsetrotatedcoords{0}{0}{135}
    \begin{scope}[tdplot_rotated_coords]
      \draw[->] (1, 0, 0) -- (6, 0, 0) node[right,color=black]{$x$};
      \draw[->] (1, 1, 0) -- (1, 4, 0) node[left,color=black]{$y$};
      \draw[fill=gray!40, fill opacity=.9] (2, -2, -2) -- (2, 0, -2) -- (2, 0, 0) -- (2, -2, 0) -- cycle;
      \draw[fill=gray!40, fill opacity=.9] (2, 0, -2) -- (2, 2, -2) -- (2, 2, 0) -- (2, 0, 0) -- cycle;
      \draw[fill=gray!40, fill opacity=.9] (2, -2, 0) -- (2, 0, 0) -- (2, 0, 2) -- (2, -2, 2) -- cycle;
      \draw[fill=gray!40, fill opacity=.9] (2, 0, 0) -- (2, 2, 0) -- (2, 2, 2) -- (2, 0, 2) -- cycle;
      \draw[fill=gray!20, fill opacity=.9] (2, 2, 2) -- (4, 2, 2) -- (4, 0, 2) -- (2, 0, 2) -- cycle;
      \draw[fill=gray!20, fill opacity=.9] (2, 0, 2) -- (4, 0, 2) -- (4, -2, 2) -- (2, -2, 2) -- cycle;
      \draw[fill=gray!20, fill opacity=.9] (2, -2, 2) -- (4, -2, 2) -- (4, -2, 0) -- (2, -2, 0) -- cycle;
      \draw[fill=gray!20, fill opacity=.9] (2, -2, 0) -- (4, -2, 0) -- (4, -2, -2) -- (2, -2, -2) -- cycle;
      \draw[fill=gray!20, fill opacity=.9] (0, 1, -1) -- (2, 1, -1) -- (2, -1, -1) -- (0, -1, -1) -- cycle;
      \draw[fill=gray!20, fill opacity=.9] (0, 1, 1) -- (2, 1, 1) -- (2, 1, -1) -- (0, 1, -1) -- cycle;
      \draw[fill=black] (1, 1, 0) circle (.03);
      \draw[fill=gray!40, fill opacity=.9] (0, -1, -1) -- (0, 1, -1) -- (0, 1, 1) -- (0, -1, 1) -- cycle;
      \draw[color=black] (1, 0, 0) -- (2, 0, 0);
      \draw[color=black] (1, 0, 0) -- (1, 1, 0);
      \draw[color=black] (1, 0, 0) -- (1, 0, 1);
      \draw[fill=gray!20, fill opacity=.9] (0, 1, 1) -- (2, 1, 1) -- (2, -1, 1) -- (0, -1, 1) -- cycle;
      \draw[fill=gray!20, fill opacity=.9] (0, -1, 1) -- (2, -1, 1) -- (2, -1, -1) -- (0, -1, -1) -- cycle;
      \draw[fill=black] (1, 0, 1) circle (.03);
      \draw[->] (1, 0, 1) -- (1, 0, 2.5) node[left,color=black]{$z$};
      \node[below left,color=black] at (0, -1, -1) {$B^{\mathsf{u}}$};
      \node[below left,color=black] at (2, -2, -2) {$B^{\mathsf{v}}$};
    \end{scope}
  \end{tikzpicture}
  \caption{Example of a box $B^{\mathsf{u}}$ together with its neighbors $\{B^{\mathsf{v}}\}$, $\mathsf{v} \in \{\mathsf{u}\}^+$. The corresponding faces $F^{\mathsf{u}}$ and $(F^{\mathsf{v}})_{\mathsf{v} \in \{\mathsf{u}\}^+}$ are colored in darker gray.}
  \label{fig:five_boxes}
\end{figure}
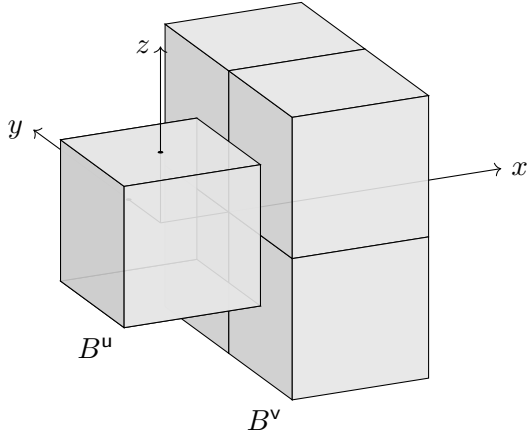

The following proposition ensures that such local connections occur with sufficiently high probability, provided the scaling constant $k$ is chosen large enough.

\begin{proposition} \label{prop:k_blocks} 
Let  $p \in (p_c^{\textrm{site}}(\ZZ^d), 1 - p_c^{\textrm{site}}(\ZZ^d))$ and $\zeta<1$. There exist $0 < \delta(d,p) < \frac{1}{64000}$ small enough, and $k=k(d,p,\zeta) \geq 1$ large enough, such that the following holds. For all $\xi \in \Xi$, $\mathsf u\in \mathsf V$ and $(S,t) \in \mathscr S^{\mathsf u}_{\delta}$,
\begin{equation}
\PP_p \big( G_{\xi}^{\mathsf u}(S,t) \big) \geq \zeta.
\end{equation}
\end{proposition}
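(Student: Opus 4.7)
The plan is to reduce Proposition~\ref{prop:k_blocks}, via Wierman's coupling (Lemma~\ref{lem:Wierman_gen}), to a single \emph{monochromatic} propagation event inside the fixed box $B^{\mathsf{u}}$, and then to derive the latter from the oriented percolation propagation lemma (Proposition~\ref{prop:oriented}). The parameter $\delta$ will be chosen first, depending only on $(d,p)$, to make Proposition~\ref{prop:oriented} applicable; then $k$ will be chosen large, depending on $(d,p,\zeta)$, to push the probability above $\zeta$.

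\textbf{Step 1: reduction to a monochromatic event.} For each $x \in S$, introduce the shifted word $\xi^{(x)} := (\xi_{t_x}, \xi_{t_x+1}, \ldots) \in \Xi$, and apply Lemma~\ref{lem:Wierman_gen} with $\Lambda := F^{\mathsf{u}} \cup B^{\mathsf{u}}$, the source set $S$, and the family $(\xi^{(x)})_{x\in S}$. This produces a coupling $(\omega, \tilde\omega)$ with $\omega, \tilde\omega \sim \PP_p$ such that, whenever some $y \in \Lambda$ is $\wordone$-connected in $\omega$ to an $x \in S$ inside $\Lambda$, the same $y$ is $\xi^{(x)}$-connected to $x$ in $\tilde\omega$ inside $\Lambda$. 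Setting $t^{\mathsf{v}}_y := t_x + (\text{path length}) - 1$, this exactly realizes the connection $x \ra{\xi_{[t^{\mathsf{u}}_x, t^{\mathsf{v}}_y]}} y$ required in \eqref{eq:6}. Any self-avoiding path in $B^{\mathsf{u}}$ has length at most $\Ct = (2k+1)^d$, so $t^{\mathsf{v}}_y \leq \Ct \mathsf{u}_1 + \Ct \leq \Ct \mathsf{v}_1$ and the seed length condition is automatic. It therefore suffices to prove the following monochromatic statement, in a single configuration $\omega \sim \PP_p$: with probability at least $\zeta$, for every $\mathsf{v}\in\{\mathsf{u}\}^+$ there is a set $S^{\mathsf v}\subseteq F^{\mathsf v}$ with $|S^{\mathsf v}|\ge 64000\delta|F^{\mathsf v}|$, all of whose vertices are $\wordone$-connected to $S$ through self-avoiding paths in $F^{\mathsf u}\cup B^{\mathsf u}$.

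\textbf{Step 2: propagation inside the box.} The geometry is translation-invariant and the same for every $\mathsf u$, so we may assume $\mathsf u=0$. By the definition \eqref{eq:14} of $F^{\mathsf v}$ for $\mathsf v\in\{\mathsf{u}\}^+=\{\pm(2,\pm 1,\pm 1)\}$ (with the appropriate offsets), the four pieces $F^{\mathsf v}\cap B^{\mathsf u}$ tile exactly the face of $B^{\mathsf u}$ opposite to $F^{\mathsf u}$, each one occupying one of the four ``quadrants'' of that face and hence satisfying $|F^{\mathsf v}\cap B^{\mathsf u}|=\frac14|F^{\mathsf v}|$. Asking for a $64000\delta$-dense subset of $F^{\mathsf v}$ contained in this quadrant amounts to asking for a $256000\delta$-dense subset \emph{within} the quadrant. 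This is precisely the setting handled by the oriented propagation lemma (Proposition~\ref{prop:oriented}): in a supercritical site percolation configuration on a cube of side $2k$, a $\delta$-dense set on one face gets $\wordone$-connected to a constant-density set on each of the four quadrants of the opposite face, with a probability that tends to $1$ as $k\to\infty$. Fixing $\delta=\delta(d,p)<1/64000$ small enough (so that all density thresholds lie in the regime covered by Proposition~\ref{prop:oriented}), and then choosing $k=k(d,p,\zeta)$ large enough that the failure probability on each of the four quadrants is at most $\tfrac{1-\zeta}{4}$, a union bound over $\mathsf{v}\in\{\mathsf{u}\}^+$ gives the required lower bound $\zeta$.

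\textbf{Main obstacle.} Once Lemma~\ref{lem:Wierman_gen} and Proposition~\ref{prop:oriented} are in hand, the probabilistic content of Proposition~\ref{prop:k_blocks} is essentially packaged: the only real work is the bookkeeping of \emph{where the paths live} (strictly inside $F^{\mathsf u}\cup B^{\mathsf u}$, so that later, when different boxes are concatenated in the renormalization of Section~\ref{sec:k_blocks}, the microscopic supports are disjoint and lengths add correctly) and of the \emph{time indices} $t^{\mathsf v}_y$ produced by the coupling (so that the inequality $t^{\mathsf v}_y\le \Ct \mathsf v_1$ is preserved along the renormalization). I expect the subtlety to be entirely in the quadrant-matching geometry of Step~2, i.e.\@ ensuring that Proposition~\ref{prop:oriented} is applied to a model which matches exactly the one orthant of $B^{\mathsf u}$ corresponding to $\mathsf v$, and that the four resulting events can be combined into a single event of probability $\ge\zeta$; the substantive analytic difficulty is deferred to Proposition~\ref{prop:oriented} itself.
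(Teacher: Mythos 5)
Your Step 1 (the reduction, via Lemma~\ref{lem:Wierman_gen} applied with $\Lambda = F^{\mathsf u}\cup B^{\mathsf u}$ and the shifted words, to a purely monochromatic connection event, with the time bound $t^{\mathsf v}_y\le \Ct\mathsf u_1+\Ct\le \Ct\mathsf v_1$ coming for free from $|B^{\mathsf u}|\le \Ct$) is exactly the paper's first move, and your quadrant bookkeeping for $F^{\mathsf v}\cap B^{\mathsf u}$ is also consistent with what is needed. The problem is Step 2: you discharge the entire probabilistic content onto Proposition~\ref{prop:oriented}, but that proposition does not prove (and cannot prove) the monochromatic statement you need. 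Proposition~\ref{prop:oriented} is a statement about \emph{oriented} site percolation on the macroscopic renormalized graph $\overrightarrow{\mathsf{Slab}}_h$, at a parameter $\gamma$ that may be taken arbitrarily close to $1$, and it transports a dense set from $\mathsf L_n$ to $\mathsf R_n$ across a long box; in the paper it is used only later, inside Lemma~\ref{lem:criterionPercolation}, to control the renormalized exploration whose site variables are the good events $G^{\mathsf u}_\xi$ — whose probability $\zeta$ is precisely what Proposition~\ref{prop:k_blocks} is supposed to supply. Here, by contrast, you need an estimate for \emph{unoriented} Bernoulli percolation at the fixed, merely supercritical parameter $p$ (possibly very close to $p_c$), inside a single microscopic box of side $2k$, connecting a $\delta$-dense subset of one face to constant-density subsets of the quarters of the opposite face. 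There is no choice of parameters that turns Proposition~\ref{prop:oriented} into such a statement, and asserting that this is ``precisely the setting handled'' by it makes the argument circular (the macroscopic lemma is fed by the microscopic one, not the other way around). Your prescription for choosing $\delta$ (``so that all density thresholds lie in the regime covered by Proposition~\ref{prop:oriented}'') therefore has no content: in the paper $\delta$ is fixed by the inequality $64000\,\delta\,|F|\le \tfrac{\theta}{2}|\hat F|$, where $\theta$ is the probability that the origin percolates in the half-space $\ZZ^+\times\ZZ^{d-1}$ — a quantity depending on $(d,p)$ and invisible to any oriented-percolation statement at $\gamma$ close to $1$.

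What is actually needed in Step 2 is genuine supercritical-percolation technology, which the paper supplies as follows: (i) the fact that $p_c^{\textrm{site}}$ of the half-space equals $p_c^{\textrm{site}}(\ZZ^d)$, so each point of the middle third $\hat F$ of $\tilde F = F^{\mathsf v}\cap B^{\mathsf u}$ percolates to distance $\tfrac k3$ \emph{inside} $B^{\mathsf u}$ with probability $\theta>0$; (ii) a mixing/second-moment argument showing that with probability tending to $1$ as $k\to\infty$ at least $\tfrac{\theta}{2}|\hat F|$ such points exist; (iii) a uniform estimate that any $S\subseteq F^{\mathsf u}$ with $|S|\ge\delta|F^{\mathsf u}|$ is $1$-connected to distance $\sqrt k$ within $B^{\mathsf u}\cup F^{\mathsf u}$ with probability tending to $1$; and (iv) Pisztora's coarse-graining result (uniqueness of clusters of size $\ge\sqrt k$ in a box of side $k$) to glue the cluster of $S$ to the clusters emanating from $\hat F$, yielding the event in \eqref{eq:proof_k_blocks} with probability $\ge\zeta$ for $k$ large. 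None of these ingredients (half-space criticality, the second-moment density estimate, and especially the uniqueness-of-large-clusters gluing step) appears in your proposal, so the substantive part of the proposition remains unproved.
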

We want to emphasize that the choice of  $\delta$ in the proposition above does not depend on $\zeta$.

\begin{proof}[Proof of Proposition \ref{prop:k_blocks}]
  We first observe from Wierman's coupling, Lemma~\ref{lem:Wierman_gen}, that we can restrict ourselves to $\xi = \wordone$, i.e.\@ that it is enough to show
  \begin{equation} \label{eq:proof_k_blocks}
    \PP_p \Big(
    \begin{array}{c}
      \text{$S$ is $1$-connected within $B^{\mathsf{u}}$ to at least $64000\delta |F^{\mathsf{v}}|$ vertices}\\
      \text{in each of the faces $F^{\mathsf{v}}$, $\mathsf{v} \in \{\mathsf{u}\}^+$}
    \end{array}
    \Big) \geq \zeta.
  \end{equation}
  Indeed, this allows one to produce $(S^{\mathsf v}, t^{\mathsf v})\in \mathscr S_{64000\delta}^{\mathsf v}$, since the condition on $t^{\mathsf v}$ is then automatically satisfied, using $| B^{\mathsf{u}} | \leq \Ct$.

  This result follows immediately from classical properties of percolation theory, but we now give a brief sketch of its proof, for the sake of completeness.
  By  symmetry, it suffices to prove \eqref{eq:proof_k_blocks} for a single face $F^{\mathsf{v}}$, instead of all of them.
  Fix now a face $F^{\mathsf{v}}$ with $\mathsf{v} \in \{ \mathsf{u} \}^+$, let $\tilde{F}$ be the intersection of $F^{\mathsf{v}}$ with $B^{\mathsf{u}}$ (which has side length $k$), and let $\hat{F}$ be the middle third of $\tilde{F}$, that is, the vertices of $\tilde{F}$ that are within a distance at least $\frac{k}{3}$ from its boundary.

  Note that for every vertex $x$ in $\hat{F}$, the ball $B(x, \frac{k}{3}) \cap B^{\mathsf{u}}$ is isomorphic to the intersection of $B(0, \frac{k}{3})$ with the half-space $\mathbb{Z}^+ \times \mathbb{Z}^{d - 1}$.
  It is a well-known fact that the critical parameters for site percolation on this half-space and on the full space $\mathbb{Z}^d$ coincide, see Theorem~(7.2), (b) from \cite{Grimmett_book}.


  Let $\theta>0$ be the $\PP_p$-probability that $0$ is connected to infinity in the half-space $\mathbb{Z}^+ \times \mathbb{Z}^{d - 1}$.
By mixing and a second moment estimate, we have
  \begin{equation}
    \PP_p \Big(
    \begin{array}{c}
      \text{at least $\theta |\hat F|/2$ points in $\hat{F}$ are}\\
      \text{$1$-connected to distance $\frac{k}{3}$ within $B^{\mathsf{u}}$}
    \end{array}
    \Big) \xrightarrow[k \to \infty]{} 1.
  \end{equation}
  We now fix $\delta >0$ such that
  \begin{equation}
    \label{eq:12}
    64000 \delta |F| \le \frac{\theta}{2} |\hat F|.
  \end{equation}
  Observe that this choice is independent of $\zeta$ (but it depends on $d$ and $p$). For this choice, the equation above immediately implies
   \begin{equation}
    \PP_p \Big(
    \begin{array}{c}
      \text{at least $64000\delta |F|$ points in $\hat{F}$ are}\\
      \text{$1$-connected to distance $\frac{k}{3}$ within $B^{\mathsf{u}}$}
    \end{array}
    \Big) \xrightarrow[k \to \infty]{} 1.
  \end{equation}
  Similarly, we can prove that 
  
  \begin{equation}
    \inf_{\substack{S \subseteq F^{\mathsf{u}}\\ |S| \geq \delta |F^{\mathsf{u}}|}} \PP_p
    \Big(
    \begin{array}{c}
      \text{some point in $S$ is $1$-connected}\\
      \text{to distance $\sqrt k$ within $B^{\mathsf{u}}\cup F^{\mathsf u}$}
    \end{array}
    \Big) \xrightarrow[k \to \infty]{} 1.
  \end{equation}
  We finally make use of Theorem~3.1 from \cite{Pisztora_1996} to conclude that in a given box with side length $k$, the existence of two distinct $1$-connected components of size at least $\sqrt{k}$, as above, has a vanishing probability as $k \to \infty$. This completes the proof of Proposition \ref{prop:k_blocks}.
\end{proof}

\subsection{Exploration of an oriented percolation in the macroscopic slab}

Let $n \ge 3$ (for simplicity, $n$ is always assumed to be of the form $n=4n'+3$ for some $n'\in \mathbb N$, and $h$ always assumed to be even). Define
\begin{equation} \label{eq:macro_sets}
  \begin{split}
    \mathsf{B}_n & := \mathsf{V} \cap \big( (n, 2n) \times (-2n, 2n) \times (0, h) \big),\\[2mm]
    \mathsf{L}_n & := \mathsf{V} \cap \big( \{n+1\} \times [-n ,n] \times (0, h) \big),\\[2mm]
   \text{and } \mathsf{R}_{n} & := \mathsf{V} \cap \big( \{2n-1\} \times [-n, n] \times (0, h) \big),
  \end{split}
\end{equation}
as depicted on Figure~\ref{f:B_L_R_n}.

\begin{figure}[ht]
  \centering
  \tdplotsetmaincoords{70}{110}
  \begin{tikzpicture}[scale=.5,tdplot_main_coords]
    \tdplotsetrotatedcoords{0}{0}{135}
    \begin{scope}[tdplot_rotated_coords]
      \draw[color=gray!40,dashed,thick, ->] (0, 0, 0) -- (10, 0, 0) node[right,color=black]{$x$};
      \draw[color=gray!40,dashed,thick, ->] (0, 0, 0) -- (0, 10, 0) node[left,color=black]{$y$};
      \draw[color=gray!40,dashed,thick, ->] (0, 0, 0) -- (0, 0, 5) node[right,color=black]{$z$};
      \draw[fill=white, fill opacity=.8] (3,6,0) -- (6,6,0) -- (6,-6,0) -- (3,-6,0) -- cycle;
      \draw[fill=white, fill opacity=.8] (3,6,0) -- (6,6,0) -- (6,6,1) -- (3,6,1) -- cycle;
      \draw[fill=white, fill opacity=.8] (3,-6,0) -- (3,6,0) -- (3,6,1) -- (3,-6,1) -- cycle;
      \draw[fill=white, fill opacity=.8] (6,-6,0) -- (6,6,0) -- (6,6,1) -- (6,-6,1) -- cycle;
      \draw[fill=white, fill opacity=.8] (3,-6,0) -- (6,-6,0) -- (6,-6,1) -- (3,-6,1) -- cycle;
      \draw[rounded corners=.02, fill=gray,fill opacity=.2,draw=black,canvas is yz plane at x=6] (-3, 0) rectangle (3, 1);
      \draw[fill=white, fill opacity=.8] (3,6,1) -- (6,6,1) -- (6,-6,1) -- (3,-6,1) -- cycle;
      \draw[decorate,decoration={brace,mirror,amplitude=5pt}] (0, 0, 0) -- (3, 0, 0) node [black, midway, yshift=-12pt] {$n$};
      \draw[decorate,decoration={brace,mirror,amplitude=5pt}] (3, -6, 0) -- (6, -6, 0) node [black, midway, yshift=-12pt] {$n$};
      \draw[decorate,decoration={brace,mirror,amplitude=5pt}] (6,-6, 1) -- (6,6, 1) node [black, midway, yshift=10pt, xshift=10pt] {$4n$};
      \draw[rounded corners=.02, fill=gray,fill opacity=.2,draw=black,canvas is yz plane at x=3] (-3, 0) rectangle (3, 1);
      \draw[decorate,decoration={brace,mirror,amplitude=4pt}] (6,-6, 0) -- (6,-6, 1) node [black, midway, xshift=10pt] {$h$};
      \node[above] at (3, 0, 0) {\small $\mathsf{L}_n$};
      \node[above] at (6, 0, 0) {\small $\mathsf{R}_n$};
      \node[below] at (3, -6, 0) {\small $\mathsf{B}_n$};
    \end{scope}
  \end{tikzpicture}
  \caption{Illustration of the sets $\mathsf{B}_n$, $\mathsf{L}_n$ and $\mathsf{R}_n$.}
  \label{f:B_L_R_n}
\end{figure}

We also choose an arbitrary ordering of $\mathsf{B}_n$: to fix ideas, we can decide to order the vertices in the lexicographic way, which ensures that $\mathsf{u}=(\mathsf{u}_1,\mathsf{u}_2,\mathsf{u}_3)$ is smaller than $\mathsf{v}=(\mathsf{v}_1,\mathsf{v}_2,\mathsf{v}_3)$ when $\mathsf{u}_1 < \mathsf{v}_1$. It will be simpler to think in terms of this particular ordering since it gives rise to explorations ``layer by layer'', even though the particular choice of an ordering is immaterial for the correctness of the proof.

Fix a set $\mathsf{S} \subseteq \mathsf{L}_n$, and let $\mathsf{S}=\mathsf{U}_0\subseteq \mathsf{U}_1\subseteq \mathsf{U}_2\ldots$ and $\emptyset=\mathsf{V}_0\subseteq \mathsf{V}_1\subseteq \mathsf{V}_2\ldots$ be two growing sequences of subsets of $\mathsf{B}_n$.  Such sequences will appear when we explore  all the (oriented) clusters touching $\mathsf{S}$: the  sequence $(\mathsf U_i)$ corresponds to the exploration of the open vertices in these clusters, while  the  sequence $(\mathsf V_i)$ explores their boundaries.  The sequences arising in this context are defined as follows, where at each step we add at most one vertex to $\mathsf U_i$ (collecting the open vertices connected to $\mathsf S$ ) or $\mathsf V_i$ (the closed boundary).   We say that the sequence $(\mathsf{X}_i)_{i \geq 0} = (\mathsf{U}_i,\mathsf{V}_i)_{i \geq 0}$ is an \emph{exploration sequence from $\mathsf{S}$} (in $\mathsf{B}_n$) if for every $i \ge 0$,
\begin{equation}
  \mathsf{X}_{i+1} =
  \begin{cases}
    \: (\mathsf{U}_i,\mathsf{V}_i) & \text{if $\partial^+\mathsf{U}_i \cap \mathsf{B}_n \subseteq \mathsf{V}_i$},\\[2mm]
    \: (\mathsf{U}_i\cup\{\mathsf{z}_i\},\mathsf{V}_i)\text{ or } (\mathsf{U}_i,\mathsf{V}_i\cup\{\mathsf{z}_i\}) & \text{otherwise},
  \end{cases}
\label{eq:2}
\end{equation}
where $\mathsf{z}_i$ is the minimum vertex that belongs to  $\partial^+\mathsf{U}_i\cap \mathsf{B}_n$, but not to $\mathsf{V}_i$.

Note that the exploration terminates in finite time (since we explore a finite graph), so the growing sequence $(\mathsf{U}_i)_{i \geq 0}$ is ultimately constant: we denote by $\mathsf{U}_\infty$ its end value.

\begin{lemma}\label{lem:criterionPercolation}
  For every $\delta>0$, there exist $\zeta = \zeta(\delta)<1$ and $c = c(\delta)>0$ such that the following holds. Assume that $\mathsf{S} \subseteq \mathsf{L}_n$ with $|{\mathsf{S}}|\ge\delta|\mathsf{L}_n|$, and that $(\mathsf{X}_i)_{i \geq 0} = (\mathsf{U}_i,\mathsf{V}_i)_{i \geq 0}$ is a random exploration sequence from $\mathsf{S}$ in $\mathsf{B}_n$ satisfying: for each $i \ge 0$,
\begin{equation} \label{eq:13}
  \PP \big(\mathsf{V}_{i+1}= \mathsf{V}_i  \:|\:\mathsf{X}_0,\ldots,\mathsf{X}_i\big)\ge \zeta \text{ a.s.}
\end{equation}
Then,
\begin{equation} \label{eq:15}
  \PP \big(|\mathsf{U}_\infty\cap \mathsf{R}_n|\ge \tfrac{1}{1000} |\mathsf{R}_n| \big)\ge 1-e^{-c h n}.
\end{equation}
\end{lemma}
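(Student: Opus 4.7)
The plan is to reduce Lemma~\ref{lem:criterionPercolation} to a propagation estimate for honest oriented Bernoulli site percolation on $\overrightarrow{\mathsf{Slab}}_h$, namely the upcoming Proposition~\ref{prop:oriented}. The hypothesis~\eqref{eq:13} is really a stochastic-domination statement in disguise: at every step of the exploration, the conditional probability that $\mathsf{z}_i$ is added to $\mathsf U_i$ is at least $\zeta$, so the final set $\mathsf U_\infty$ should dominate a genuine oriented Bernoulli$(\zeta)$ cluster emanating from $\mathsf S$.

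I would make this precise by enlarging the probability space to carry an iid family $(U_{\mathsf u})_{\mathsf u\in \mathsf B_n}$ of uniform $[0,1]$ variables and coupling the exploration with them sequentially: at the first step $i$ when $\mathsf u$ is visited (i.e.\ $\mathsf z_i=\mathsf u$), use $U_{\mathsf u}$ to realize the Bernoulli decision ``$\mathsf V_{i+1}=\mathsf V_i$'' with its true conditional probability $p_i \ge \zeta$, in such a way that $\{U_{\mathsf u}<\zeta\}\subseteq \{\mathsf V_{i+1}=\mathsf V_i\}$. Declaring $\omega^{\mathrm{perc}}_{\mathsf u}=1$ iff $U_{\mathsf u}<\zeta$ then defines a genuine oriented Bernoulli$(\zeta)$ site percolation on $\mathsf B_n$, and a short induction along an oriented open path shows that its oriented cluster $\mathsf C$ of $\mathsf S$ satisfies $\mathsf C\subseteq \mathsf U_\infty$; in particular $|\mathsf U_\infty\cap \mathsf R_n|\ge |\mathsf C\cap \mathsf R_n|$.

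Given $\delta$, I would then choose $\zeta=\zeta(\delta)<1$ close enough to one so that Proposition~\ref{prop:oriented} applies to this oriented Bernoulli$(\zeta)$ percolation and supplies a constant $c=c(\delta)>0$ with
\[
\PP\big(|\mathsf C\cap \mathsf R_n|\ge \tfrac{1}{1000}|\mathsf R_n|\big)\ge 1-e^{-chn}
\]
uniformly over $\mathsf S\subseteq \mathsf L_n$ with $|\mathsf S|\ge \delta|\mathsf L_n|$. Combining this with the previous inclusion immediately gives~\eqref{eq:15}.

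The real difficulty is therefore packed entirely into Proposition~\ref{prop:oriented}: one needs a sharp supercritical estimate saying that, for $\zeta$ close enough to one, a $\delta$-dense starting face propagates across an $h$-thick, $n$-long slab and still covers a positive proportion of the exit face, with failure probability exponentially small in $hn$, the factor $h$ in the exponent reflecting the order of $h$ essentially disjoint subslabs in which independent crossing attempts can be performed. The reduction above is the classical ``domination of a cluster exploration by Bernoulli percolation''; the only point requiring some care is to implement the coupling sequentially along the lexicographic exploration order fixed by~\eqref{eq:2}, so that the $U_{\mathsf u}$'s retain their iid structure and $\omega^{\mathrm{perc}}$ remains a genuine product measure.
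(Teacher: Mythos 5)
Your proposal is correct and is essentially the paper's own argument: the authors follow Grimmett--Marstrand's Lemma~1, attaching i.i.d.\ uniforms $(Y_{\mathsf v})$ to the vertices, realizing each acceptance decision with its true conditional probability $\zeta_{\mathsf z}\ge\zeta$, so that the accepted (``green'') set dominates oriented Bernoulli$(\zeta)$ percolation and every vertex reachable from $\mathsf S$ by an open oriented path is necessarily swallowed by $\mathsf U_\infty$, after which Proposition~\ref{prop:oriented} gives the bound. Your sequential coupling via $\{U_{\mathsf u}<\zeta\}$ and the induction along open oriented paths is exactly this argument, so there is nothing to add.
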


Before proving this lemma, let us state an auxiliary result that is proved in Section~\ref{sec:oriented}. Write $\mathsf P_p$~for the site percolation measure in the slab $\overrightarrow{\mathsf{Slab}}_h$ with parameter $p$: independently,  each vertex is open with probability $p$, closed  with probability $1-p$. For $\mathsf A \subset  \mathsf  V$ and $\mathsf v\in \mathsf V$, we write $\mathsf A \xrightarrow{}{} \mathsf v$ if there exists an open  oriented path of $\overrightarrow{\mathsf{Slab}}_h$ from a vertex of $\mathsf A$ to $\mathsf v$. The notation above corresponds to the existence of paths  in standard oriented site percolation, and it should not be confused with the notation  $x \xrightarrow{\xi} y$ introduced at the end of Section \ref{sec:notation} and corresponding to  percolation of words.
\begin{proposition} \label{prop:oriented}
For every $\delta \in (0,1)$, there exist $\gamma \in (0, 1)$ and $c > 0$ such that the following holds. For all $h \geq 1$ and $n$ large enough, if $\mathsf{S} \subseteq \mathsf{L}_n$ satisfies $|\mathsf{S}| \geq \delta |\mathsf{L}_n|$, then for  oriented site  percolation, we have
$$\mathsf{P}_\gamma \Big( \big| \big\{ \mathsf v \in \mathsf{R}_n \: : \: \mathsf{S} \xrightarrow{}{} \mathsf v \big\} \big| \leq \frac{1}{1000} |\mathsf{R}_n| \Big) \leq e^{- c h n}.$$
\end{proposition}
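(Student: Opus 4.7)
My plan is to choose $\gamma$ close enough to $1$ that oriented site percolation on $\overrightarrow{\mathsf{Slab}}_h$ is deeply supercritical, then extract the exponential-in-$hn$ estimate via a block renormalization combined with a Peierls-style counting of ``blocking configurations''.

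First I would coarse-grain the slab along the propagation direction: partition $\mathsf V$ into blocks indexed by $(i,j)\in\ZZ^2$, each of side length $K$ in the $x$- and $y$-directions and spanning the full $z$-range of the slab. For each such block, I would declare it \emph{good} when, for every $\delta'$-dense subset of its left face, oriented percolation within the block and its immediate forward neighborhood reaches a $\delta'$-dense subset of each forward-neighbor left face. A standard finite-volume estimate, in the same spirit as Proposition~\ref{prop:k_blocks} of the present paper, shows that for any threshold $\alpha<1$ one can choose $K$ and $\gamma$ so that each block is good with probability at least $\alpha$; since the good-block indicator depends only on a bounded neighborhood, the Liggett-Schonmann-Stacey comparison then gives stochastic domination of the good-block field by a product Bernoulli field of density $\alpha'$ that can be made arbitrarily close to $1$.

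Second, I would observe that the target event $\bigl\{|\{\mathsf w\in\mathsf R_n:\mathsf S\to\mathsf w\}|\le \tfrac{1}{1000}|\mathsf R_n|\bigr\}$ forces, in the coarse-grained directed graph, the existence of a \emph{separating surface} of bad blocks preventing propagation from a positive-density source region on the left to at least $\tfrac{999}{1000}|\mathsf R_n|$ of the right face. A standard isoperimetric argument in the slab forces any such separator to contain at least $chn/K^2$ blocks, and a Peierls-style enumeration caps the number of separators of size $A$ by $C(K)^A$. Weighing this against the probability $(1-\alpha')^A$ that a prescribed separator is entirely bad, and choosing $\gamma$ close enough to $1$ that $C(K)(1-\alpha')<1$, gives the claimed bound $e^{-chn}$. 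The hypothesis $|\mathsf S|\ge\delta|\mathsf L_n|$ produces the necessary renormalized input by a simple pigeonhole: at least a $\delta/2$-fraction of left-face coarse blocks have $\delta/2$-dense intersection with $\mathsf S$, which is enough to seed the block-level propagation.

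The main obstacle I anticipate is controlling the topology of ``separating surfaces'' in the $3$D directed-slab geometry so that both the Peierls count $C(K)^A$ and the isoperimetric lower bound $A\ge chn/K^2$ hold with matching constants. Unlike the $2$D dual contour argument or the Pisztora-style renormalization used for unoriented $3$D supercritical percolation, here the separators live in a directed graph on a thin slab, and one likely needs a max-flow/min-cut description of block connectivity together with a tailored ``Lipschitz surface'' parametrization to guarantee a genuinely $O(hn)$ lower bound on the area of any valid separator — without which the Peierls counting would overshoot the target exponential rate.
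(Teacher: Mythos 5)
Your strategy (block renormalization, LSS domination, then a Peierls count over separating surfaces of bad blocks) is genuinely different from the paper's route, but as written it has a gap exactly at the step that must produce the factor $h$ in the exponent, and the two halves of your argument are not compatible with each other. With blocks of side $K$ in $x$ and $y$ that span the \emph{full} $z$-range of the slab, the renormalized graph is two-dimensional, of size roughly $(n/K)\times(4n/K)$; any set of blocks separating the left side from the right side then has cardinality of order $n/K$, not $chn/K^{2}$ as you claim, so a Peierls bound with a per-block failure probability that is merely a small constant (which is all that ``a standard finite-volume estimate plus Liggett--Schonmann--Stacey'' can deliver when the block height grows with $h$) yields only $e^{-cn}$, missing the required $e^{-chn}$. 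To recover the exponent $hn$ in this 2D picture you would need the single-block failure probability to be $e^{-c hK}$ \emph{uniformly over all $\delta'$-dense seeds of a face of size $\asymp Kh$}, and that statement is essentially a miniature of Proposition~\ref{prop:oriented} itself (it is not analogous to Proposition~\ref{prop:k_blocks}, whose boxes have size fixed independently of $h$ and whose success probability is only close to $1$, not exponentially close). If instead you take genuinely 3D blocks of bounded side $K$, then the claimed separator lower bound $chn/K^{2}$ is at least dimensionally correct, but now everything hinges on the part you yourself flag as open: that the event ``at most $\frac{1}{1000}|\mathsf R_n|$ of the right window is reached'' forces a bad-block \emph{directed} cut of size $\gtrsim hn/K^{2}$ that can be enumerated with only $C^{A}$ surfaces of size $A$. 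This is delicate here for several reasons: the event only blocks most (not all) of $\mathsf R_n$ while infection may leak in the $y$-direction outside the window $[-n,n]$; when $\delta$ is small the source has only $\delta|\mathsf L_n|$ vertices, so a Menger/max-flow lower bound needs a preliminary spreading step before the cut must have size of order $hn$; and the geometry and counting of minimal directed vertex cuts in a thin slab is not covered by the standard (undirected, Pisztora-type) surface arguments. So the proposal, as it stands, does not close.

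For contrast, the paper sidesteps all surface counting: it restricts the oriented percolation to $\Theta(h)$ disjoint two-dimensional sheets $\mathbb Z\times\mathbb Z\times\{i,i+1\}$, proves a planar lemma (via the Liggett--Steif domination of the upper invariant measure by a high-density product measure, upgraded from a $\delta$-dense seed by an explicit increasing ``X-shaped'' crossing event and FKG), spreads in the $z$-direction by an accordion-type embedding of a planar strip into the slab, and obtains the rate $e^{-chn}$ from Hoeffding's inequality applied to the many \emph{independent} sheets and boxes. There the factor $h$ in the exponent comes from independence across the height, which is precisely the mechanism your coarse-graining choice erases; if you want to pursue your route, the cleanest fix is to reintroduce that independence (e.g.\ bounded blocks plus a cut-counting statement you would have to prove, or a sheet-wise decomposition as in the paper) rather than hoping for a generic isoperimetric bound.
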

We postpone the proof of the above proposition to Section~\ref{sec:oriented}.

\begin{proof}[Proof of Lemma~\ref{lem:criterionPercolation}]
  We follow closely the proof of Lemma 1 in \cite{Grimmett_Marstrand_1990}. Let $(Y_{\mathsf v})_{\mathsf v \in \mathsf B_n}$ be i.i.d. random variables with the uniform distribution on $[0,1]$. We construct an exploration sequence $\mathsf X'=(\mathsf U',\mathsf V')$ with the same law as the one in the statement of the lemma, in such a way that all vertices $\mathsf v$ ultimately in $\mathsf V'=\mathsf V_\infty'$ satisfy $Y_{\mathsf v}>\zeta$ (these vertices are called ``red'' vertices). Define \[\mathsf X_0'=(\mathsf U_0',\mathsf V_0'):=(\mathsf S,\emptyset),\] and declare all the vertices of $\mathsf S$ to be ``green''. Let $\mathsf z$ be the minimum vertex that belongs to $\partial^+ \mathsf U_0$ (notice that $\mathsf z$ is always well-defined at this first step if $n\ge1$). Then define
$\zeta_{\mathsf z}=\mathbb P(X_{1}=(\mathsf S\cup\{\mathsf z\},\emptyset)\: |\: \mathsf X_0=(\mathsf S,\emptyset))$ and set

  \begin{equation}
        \mathsf X_1'=
    \begin{cases}
      (\mathsf S\cup \{z\}, \emptyset)& \text{if }Y_{\mathsf z}\le\zeta_{\mathsf z}\\
        (\mathsf S, \{z\})& \text{if }Y_{\mathsf z}> \zeta_{\mathsf z}.
      \end{cases}
    \end{equation}
   Declare the vertex $\mathsf z$ to be ``green'' in the first case, and ``red'' in the second case.
   The conditional probability $\zeta_{\mathsf z}$ above is defined in such a way that $(\mathsf X_0', \mathsf X_1')$ has exactly the same law as the  first two steps of the random exploration sequence  of the lemma.

    We proceed iteratively. Assume that $\mathsf X_0'=(\mathsf U_0',\mathsf V_0'),\ldots, \mathsf X_i'=(\mathsf U_i',\mathsf V_i')$ have been constructed. Let $\mathsf z$ be the minimum vertex that belongs to $\partial^+ \mathsf U_i'$ (if $\partial^+ \mathsf U_i'$ is empty then we set $\mathsf X_{i+1}'=(\mathsf U_{i+1}',\mathsf V_{i+1}')=(\mathsf U_{i}',\mathsf V_{i}')$). In case $\mathsf z$ is well defined, consider
    $\zeta_{\mathsf z}=\mathbb P(\mathsf X_{i+1}=(\mathsf U_i\cup\{\mathsf z\},\mathsf V_i)\: |\: \mathsf X_0=\mathsf X'_0,\ldots,\mathsf X_i=\mathsf X'_i)$ and set
  \begin{equation}
    \label{eq:4}
    \mathsf X_{i+1}'=
    \begin{cases}
      (\mathsf U_i'\cup \{z\}, \mathsf V_i')& \text{if }Y_{\mathsf z}\le\zeta_{\mathsf z}\\
        (\mathsf U_i',\mathsf V_i'\cup \{z\})& \text{if }Y_{\mathsf z}> \zeta_{\mathsf z}.
      \end{cases}
    \end{equation}
    As in the first step, declare the vertex $\mathsf z$ to be ``green'' in the first case and ``red'' in the second case.

    Once the algorithm above terminates (when $\mathsf X_{i+1}' = \mathsf X_{i}'$), declare all the vertices that have not been explored to be ``green''.

    Since at each step, the uniform random variable $Y_{\mathsf z}$ is independent of the previous steps, the process $\mathsf X'$ has the same law as $\mathsf X$. Furthermore, since for every explored vertex $\mathsf z$ we have  $\zeta_{\mathsf z}\ge\zeta$ (by hypothesis), we see that the set of green vertices dominates a Bernoulli site percolation with parameter $\zeta$. By Proposition~\ref{prop:oriented}, if $\zeta$ is close enough to $1$, there exist more than $\frac1{1000}|\mathsf R_n|$ vertices of $|\mathsf R_n|$ that can be reached from $\mathsf S$ by a green oriented path with probability larger than $1-e^{-chn}$. By definition of the exploration sequence, all these vertices have been reached by the exploration sequence. This concludes that
    \begin{equation}
      \label{eq:11}
      |\mathsf U_\infty'\cap \mathsf R_n|\ge\frac 1{1000}|\mathsf R_n|
    \end{equation}
    with probability larger than $1-e^{-chn}$.
\end{proof}

\subsection{Proof of Proposition \ref{prop:En}}
Fix $p \in (p_c^{\textrm{site}}(\ZZ^d), 1 - p_c^{\textrm{site}}(\ZZ^d))$. Let $\delta=\delta(d,p)$ be as in Proposition~\ref{prop:k_blocks}. Let $\zeta<1$ as in Proposition~\ref{prop:oriented}. Finally let $k$ be as in  Proposition~\ref{prop:k_blocks}.
We fix $n \geq 3$ and \textbf{assume for convenience that $n=4n'+3$ for some $n'\in \mathbb N$} (we leave the reader convince themself that this hypothesis can be removed). Define the following subsets of the microscopic graph ${\mathbb S}_h$:
\begin{equation}
  \begin{split}
    B_n & := \mathbb Z^d \cap \big( [kn, 2kn] \times (-2kn, 2kn] \times (0, hk] \times (-k, k]^{d-3} \big),\\[2mm]
    L_n & := \mathbb Z^d \cap \big( \{kn\} \times (-kn, kn] \times (0, hk] \times (-k, k]^{d-3} \big),\\[2mm]
   \text{and } R_n & := \mathbb Z^d \cap \big( \{2kn\} \times (-kn-3k, kn+3k] \times (0, hk] \times (-k, k]^{d-3} \big).
  \end{split}
\end{equation}
A picture of the above sets would look very similar to that of the macroscopic $\mathsf{B}_n$, $\mathsf{L}_n$ and $\mathsf{R}_n$ in Figure~\ref{f:B_L_R_n}. Notice that
\begin{equation}
 \bigcup_{\mathsf u\in \mathsf B_n}B^{\mathsf u} \subset B_n,\quad  L_n\subset \bigcup_{\mathsf u\in \mathsf L_n}F^{\mathsf u} \quad\text{ and }\quad\bigcup_{\mathsf u\in \mathsf \partial^+\mathsf R_n}F^{\mathsf u}\subset R_n.\label{eq:31}
\end{equation}
Let $\xi \in \Xi$. We first claim that for every fixed set $T \subseteq L_n$ with $|T|\ge 2 \delta|L_n|$, for all $t:T\to [0,\Ct n]$, we have
\begin{equation}
  \label{eq:7}
  \mathbb P_p \bigg(
  \begin{array}{c}
  \exists T' \subseteq R_n \text{ with } |T'| \ge 8 \delta |\partial \Lambda_{2n}|, \: \exists t' : T'\to [0,2\Ct n] \text{ s.t.}\\
    \text{for all } y \in T', \: \exists x \in T \text{ with } x \lra{\xi_{[t_x,t'_y]}} y \text{ in } B_n
  \end{array}
  \bigg)
  \ge 1-e^{-ch n}.
\end{equation}

Before proving \eqref{eq:7}, let us explain how it  yields Proposition~\ref{prop:En}. Assume that $E_m^{n}(\xi)$ occurs and consider a set $T$ as in \eqref{eq:5}. Partition $T$ into four disjoint subsets corresponding to the left, right, top and bottom part of the boundary of $\Lambda_n$. By symmetry, we may assume for example that the right part, denoted by $T_r$, has size larger than $|T|/4\ge 2\delta|L_n|$. Finally, using independence, we can deduce from \eqref{eq:7}, that with probability larger than $1-e^{-chn}$, the set $T_r$ gives rise to a set $T'$ in $\partial \Lambda_{2n}$ of size larger than $8\delta|\partial \Lambda_{2n}|$. This concludes
\begin{equation}
  \label{eq:8}
  \mathbb P_p \big( E_m^{2n}(\xi)\:|\:E_m^n(\xi) \big) \ge 1-e^{-c h n},
\end{equation}
which completes the proof of Proposition~\ref{prop:En}.  The remainder of this section is devoted to proving~\eqref{eq:7}.

We first set some notation. Let $T \subseteq L_n$ with $|T|\ge 2 \delta|L_n|$, and $t:T \to [0,\Ct n]$. Introduce
\begin{equation}
  \label{eq:9}
  \mathsf{T} := \{\mathsf{u} \in \mathsf{L}_n\: :\:  |T\cap F^{\mathsf{u}}|\ge \delta |F|\}.
\end{equation}
Using that  $T$ has a density at least $2 \delta$ in $L_n$, the second inclusion in \eqref{eq:31} implies that  $\mathsf{T}$ has a density at least $\delta$ in $\mathsf{L}_n$, i.e.
\begin{equation}
  \label{eq:10}
  |\mathsf{T}|\ge \delta |\mathsf{L}_n|.
\end{equation}

Let $\omega$ be a configuration of site percolation with parameter $p$ in $B_n$. We want to define a random exploration process in the macroscopic box $\mathsf{B}_{n}$ that will correspond to a $\xi$-exploration in the microscopic box  $B_n$. For $\mathsf{U}\subseteq \mathsf{B}_{n}$, set $B^{\mathsf{U}} := \bigcup_{\mathsf{u}\in \mathsf{U}}B^{\mathsf{u}}$. For $\mathsf{v}=(\mathsf{v}_1,\mathsf{v}_2,\mathsf{v}_3)\in \mathsf{U}^+$, define
\begin{equation}
  \label{eq:16}
  S^{\mathsf{v}}(\mathsf{U}) := \bigcup_{t\le\Ct \mathsf{v}_1 }\ \bigcup_{x\in T}\big\{y\in F^{\mathsf{v}}\::\:x\lra{\xi_{[t_x,t]}}y\text{ in } B^{\mathsf{U}} \big\},
\end{equation}
and for every $y\in S^{\mathsf{v}}(\mathsf{U})$,
\begin{equation}
  \label{eq:17}
  t^{\mathsf{v}}_y(\mathsf{U}) := \min \big\{ t \le  \Ct (n + \mathsf{v}_1) \: :\: \exists x\in T \text{ with } x \lra{\xi_{[t_x,t]}} y \text{ in } B^{\mathsf{U}} \big\}.
\end{equation}

We now construct a random exploration sequence $\mathsf{X}_0=(\mathsf{U}_0,\mathsf{V}_0)$, $\mathsf{X}_1=(\mathsf{U}_1,\mathsf{V}_1) \ldots$ in $\mathsf{B}_n$ that is measurable with respect to the microscopic percolation configuration $\omega$, and such that for each $i \ge 0$,
\begin{equation}
  \label{eq:19}
  \text{for all $\mathsf{v} \in \mathsf{U}_i^+$}, \quad (S^{\mathsf v}({\mathsf U}_i),t^{\mathsf v}(\mathsf U_i))\text{ is a $\delta$-seed for ${\mathsf v}$.}
\end{equation}
 First, define
\begin{equation}
  \label{eq:18}
  {\mathsf U}_0 := \big\{ {\mathsf u}\in{\mathsf T} \: : \: G_{\xi}^{\mathsf u}(T\cap F^{\mathsf u},t|_{T\cap F^{\mathsf u}}) \text{ occurs} \big\} \quad \text{ and } \quad {\mathsf V}_0 := \emptyset
\end{equation}
(here we use that $(T\cap F^{\mathsf u},t|_{T\cap F^{\mathsf u}})$ is a $\delta$-seed for ${\mathsf u}$, from the definition of ${\mathsf T}$ in \eqref{eq:9}). It follows from the definition~\eqref{eq:6} that \eqref{eq:19} is satisfied for $i = 0$.

Now, fix $i\ge0$ and assume that $\mathsf X_0=(\mathsf U_0,\mathsf V_0),\ldots,\mathsf X_i=(\mathsf U_i,\mathsf V_i)$ have been constructed in such a way that \eqref{eq:19} is satisfied for some $i$.
We define $X_{i+1}$ as follows. If $\partial^+\mathsf U_i\cap {\mathsf B}_n\subseteq {\mathsf V_i}$, then set $X_{i+1}=X_i$. Otherwise let ${\mathsf v}$ be the minimum vertex that belongs to $\partial^+\mathsf U_i\cap {\mathsf B}_n$ but not to ${\mathsf V_i}$. By \eqref{eq:19}, the pair $(S^{\mathsf v},t^{\mathsf v})=(S^{\mathsf v}({\mathsf U}_i),t^{\mathsf v}({\mathsf U}_i))$ is a seed for ${\mathsf v}$, and we define
\begin{equation}
  \label{eq:20}
  (\mathsf U_{i+1},\mathsf V_{i+1})=
  \begin{cases}
     \: (\mathsf U_{i}\cup\{\mathsf v\},\mathsf V_{i}) & \text{if $G_{\xi}^{\mathsf v}(S^{\mathsf v},t^{\mathsf v})$ occurs}\\[2mm]
     \: (\mathsf U_{i},\mathsf V_{i}\cup\{\mathsf v\}) & \text{otherwise.}
  \end{cases}
\end{equation}
First, by Hoeffding's inequality, we have
\begin{equation}
  \label{eq:21}
  {\mathbb P}_p(|{\mathsf U}_0|\ge \tfrac\delta 2|{\mathsf L}_n|)\ge 1-e^{-c h n}
\end{equation}
for some constant $c>0$. Using independence and Proposition~\ref{prop:k_blocks}, we also obtain
\begin{equation}
  \label{eq:22}
  {\mathbb P}_p\big (\mathsf V_{i+1}= \mathsf V_i  \:|\:\mathsf X_0,\ldots,\mathsf X_i\big)\ge \zeta \text{ a.s.}
\end{equation}
The two equations above, combined with Lemma~\ref{lem:criterionPercolation}, imply that
\begin{equation}
  \label{eq:23}
  \mathbb P_p\big(|{\mathsf U}_\infty\cap {\mathsf R}_n|\ge \tfrac{1}{1000} |{\mathsf R}_n|\big)\ge 1-2 e^{-c h n}.
\end{equation}
When the exploration terminates, the property \eqref{eq:19} is still satisfied. Therefore, on the event $|{\mathsf U}_\infty\cap {\mathsf R}_n|\ge \tfrac{1}{1000} |{\mathsf R}_n|$, the set
\begin{equation}
  \label{eq:24}
  T'=\bigcup_{{\mathsf v} \in {\mathsf U}_\infty^+ \cap \partial^+{\mathsf R}_n}S^{\mathsf v}({\mathsf U}_\infty)
\end{equation}
satisfies
\begin{itemize}
\item $|T'|\ge 64000 \delta|F| \cdot |{\mathsf U}_\infty^+ \cap \partial^+ {\mathsf R}_n| \ge 64 \delta |F|\cdot|{\mathsf R}_n| = 64 \delta|R_n|\ge 8\delta|\partial \Lambda_{2n}|$,\\[-5mm]
\item and for each $y$ in $T'$, there exist $t'_y\le 2\Ct n$ and $x \in T$ such that $x\lra{\xi_{[t_x,t'_y]}}y$ in $B_{n}$.
\end{itemize}
We can thus deduce \eqref{eq:7} from \eqref{eq:23}, which completes the proof.

\section{Proof of Wierman's coupling} \label{sec:Wierman}

\begin{figure}
\begin{center}

\subfigure{\includegraphics[height=.4\textwidth]{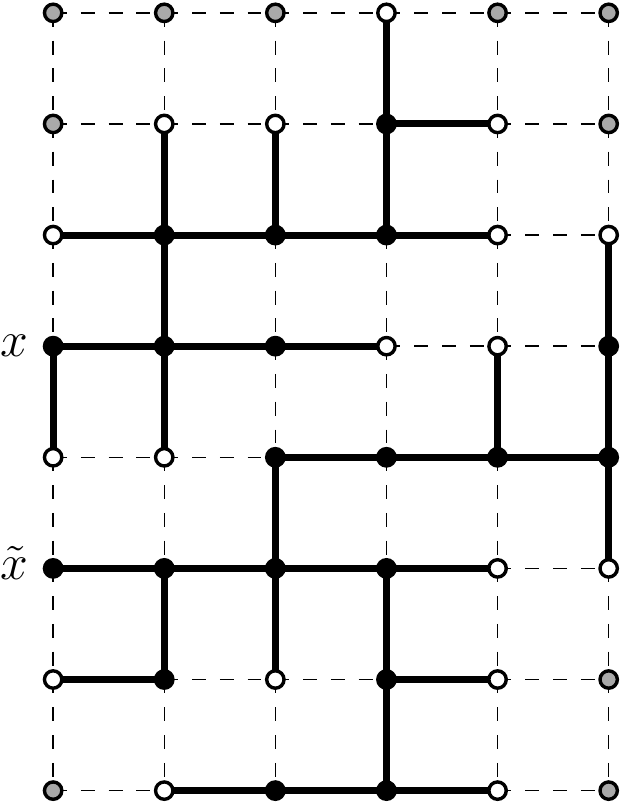}}
\hspace{0.16\textwidth}
\subfigure{\includegraphics[height=.4\textwidth]{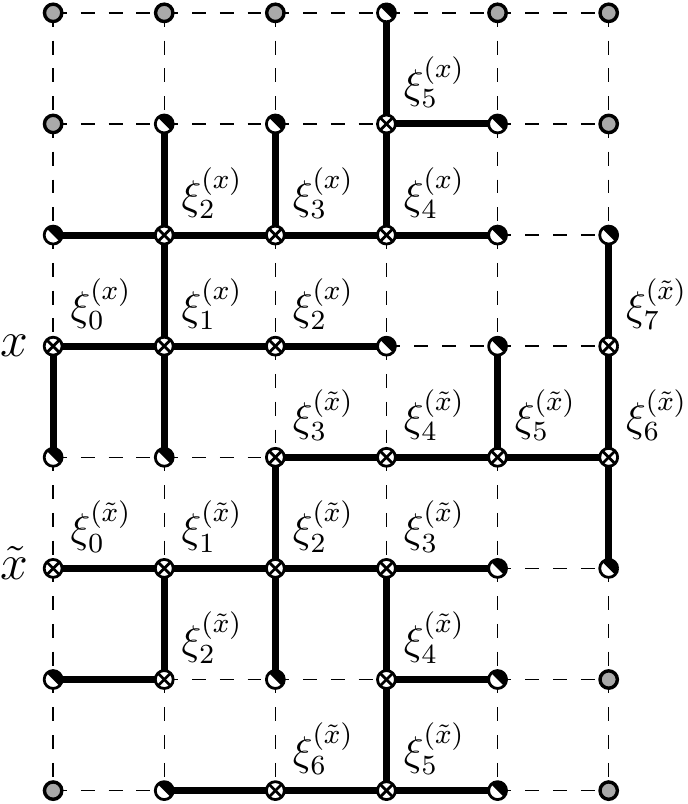}}\\

\caption{\label{fig:Wierman} This figure shows the coupling between two configurations $\omega$ (\emph{left}) and $\tilde{\omega}$ (\emph{right}) used in the proof of Lemma \ref{lem:Wierman_gen}. Each of these configurations is distributed as Bernoulli site percolation with parameter $p \leq \frac{1}{2}$. Starting an exploration procedure from the vertices in $S = \{ x, \tilde{x} \}$, we produce a spanning forest of the $1$-cluster of $S$, together with its outer boundary: its branches are used to read $\wordone$ on $\omega$, and $(\xi^{(x)}, \xi^{(\tilde{x})})$ on $\tilde{\omega}$. The $0$-sites in $\omega$ discovered during the exploration procedure may be $0$ or $1$ in $\tilde{\omega}$, and they are not used to read $\xi^{(x)}$ or $\xi^{(\tilde{x})}$. The gray vertices remain undiscovered.}
\end{center}
\end{figure}

\begin{proof}[Proof of Lemma \ref{lem:Wierman_gen}]
In the configuration $\omega$, we are going to explore the set of vertices which are $1$-connected to $S$ in $\Lambda$, as well as the immediate neighborhood of this set. During this exploration, we grow (in a Markovian way) a spanning forest of this explored set, such that at all times, it contains exactly one tree for each vertex $x \in S$, as illustrated in Figure~\ref{fig:Wierman}.

  We start by fixing an arbitrary order on the vertices of~$\Lambda$.
  We then let the first forest $\mathcal{F}_0$ in our construction consist simply of the vertices in $S$, without any edges.
  Having defined $\mathcal{F}_t$ for some time $t$, we inductively add one extra vertex and one extra edge to $\mathcal{F}_t$ in order to obtain $\mathcal{F}_{t + 1}$.
  For this, we pick the minimum unexplored vertex $y'$ that neighbors a $1$-vertex of $\mathcal{F}_t$, and we let $y$ be the minimum $1$-vertex in $\mathcal{F}_t$ neighboring $y'$.

We now add the vertex $y'$ and the edge $(y, y')$ to $\mathcal{F}_{t + 1}$ as follows: let $d$ be the distance between $y$ and $S$ within the forest, and set
\begin{equation}
(\omega_{y'}, \tilde{\omega}_{y'}) = \left \lbrace
\begin{array}{ll}
(1, \xi^{(x)}_{d+1}) & \text{w.p. } p,\\[2mm]
(0,0) & \text{w.p. } 1 - 2p \text{ if } \xi^{(x)}_{d+1} = 0, \text{ and w.p. } 1 - p \text{ if } \xi^{(x)}_{d+1} = 1,\\[2mm]
(0,1) & \text{w.p. } p \text{ if } \xi^{(x)}_{d+1} = 0
\end{array}
\right.
\end{equation}
(the third option does not arise when $\xi^{(x)}_{d+1} = 1$). When the exploration procedure stops, we simply draw independently the remaining undiscovered vertices.

By construction, all the sites $y$ which are $1$-connected to $S$ are $\xi^{(x)}$-connected to some $x \in S$ in $\tilde{\omega}$, by following the same branch of the spanning forest, which ensures that the property in the statement holds true. Note that the sites which are discovered and have state $0$ in $\omega$ are not used here to observe $(\xi^{(x)})_{x \in S} \in \Xi$.
\end{proof}

\section{Oriented percolation lemma} \label{sec:oriented}

In this section, we establish Proposition~\ref{prop:oriented}, which is a result about oriented percolation on the slab $\overrightarrow{\mathsf{Slab}}_h$. Before proving it, we establish an auxiliary lemma for planar oriented percolation, i.e.\@ in dimension $d = 1 + 1$ (for more details on the topic, see e.g.~\cite{MR757768}). We consider the oriented graph~$G$ (illustrated on Figure~\ref{fig:1}) with vertex set
\begin{equation}
  V=\big\{(x,y)\in \mathbb Z^2\: :\: x\in 2\mathbb Z,\, \tfrac x2 +y\in 2\mathbb Z\big\},  
\end{equation}
and (oriented)  edge set 
\begin{equation}
   E=\big\{(u,u+(2, e)) \: :\: u\in V,\, e=\pm 1\big\}.
 \end{equation}

 \begin{figure}[htbp]\label{fig:1}
  \centering
  \includegraphics[width=3cm]{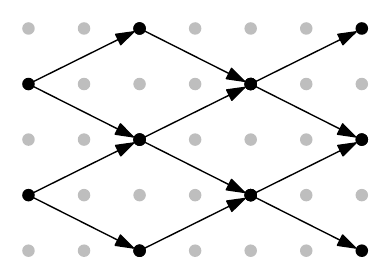}
  \caption{A portion of the  graph $G=(V,E)$.}
\end{figure}
This graph, which is isomorphic to the usual oriented square lattice, can also be seen as the projection of $ \overrightarrow{\mathsf{Slab}}_h$ onto its first two coordinates. This will be convenient later because we will see  $G$ as a subgraph of the oriented slab $\overrightarrow{\mathsf{Slab}}_h$.

Consider the Bernoulli site  percolation measure on $G$ with parameter $0\le \gamma \le 1$. Each vertex in $V$ is open with probability $\gamma$ and closed with probability $1-\gamma$, independently of the other vertices.  Let $n$ be an even integer. For a set of vertices $\mathsf A\subseteq \{0\}\times\mathbb Z$, define
 \begin{equation}
   \xi^\mathsf{A}_{5n} := \big\{ y \in [-n,n] \: : \: \mathsf{A} \to (5n, y) \big\},
 \end{equation}
where $ \mathsf{A} \to v$ means that there exists an open oriented path from a vertex $u\in \mathsf A$ to $v$. Notice that the quantity above satisfies the following monotonicity property:
\begin{equation}
  \label{eq:28}
  \text{If $\mathsf A \subseteq \mathsf B$, then }\xi^\mathsf{A}_{5n} \subseteq \xi^\mathsf{B}_{5n}.
\end{equation}

\begin{lemma}
  \label{l:domination}
  $(d = 1 + 1)$ For any $0<\delta<\frac{1}{10}$, there exists $\gamma = \gamma(\delta) \in (0, 1)$ large enough so that: for every $n\ge2$ even, for every set of vertices $\mathsf{S} \subseteq \{0\} \times [-n, n]$ with $|\mathsf{S}| \geq \delta n$, we have that $\xi^\mathsf{S}_{5n}$ stochastically dominates the product measure on $\{-n,-n+2, \ldots, n\}$ with parameter $\frac{1}{2}$.
\end{lemma}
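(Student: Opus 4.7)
The plan is to combine a two-phase ``spread-and-harvest'' construction with the Liggett--Schonmann--Stacey (LSS) stochastic-domination theorem. The first $4n$ time-steps are used to spread the influence of $\mathsf S$ densely across the interval $\{4n\}\times[-2n,2n]$, and the remaining $n$ steps are used to reach each target by a short, local path. This makes the harvesting events short-range-dependent in $y$, which is the structure LSS needs.

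For the spreading phase, using standard block / renormalization arguments for deeply supercritical oriented percolation, I would show that for $\gamma$ close enough to $1$ (depending on $\delta$), the event
\[
E := \Big\{ \xi^{\mathsf S}_{4n}\cap\big(\{4n\}\times[-2n,2n]\big)\text{ has density at least } 1-\tilde\varepsilon \Big\}
\]
satisfies $\mathsf P_\gamma(E)\ge 1-e^{-c_0 n}$, with $c_0=c_0(\gamma,\delta)$ arbitrarily large and $\tilde\varepsilon=\tilde\varepsilon(\gamma,\delta)$ arbitrarily small as $\gamma\to 1$. The hypothesis $|\mathsf S|\ge\delta n$ guarantees at least one seed; for $\gamma$ close to $1$ the descendants of each seed fill its lightcone at a density tending to $\theta(\gamma)\to 1$, and a Hoeffding-type concentration over independent sub-blocks controls the deficit uniformly over $[-2n,2n]$.

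For the harvesting phase, for each target $y$ in the correct sublattice of $\{-n,-n+2,\ldots, n\}$, I set
\[
A_y := \Big\{\exists z\in\xi^{\mathsf S}_{4n},\ |z-y|\le C,\ (4n,z)\to(5n,y)\text{ inside }[4n,5n]\times[y-2C,y+2C]\Big\},
\]
for a constant $C=C(\delta)$. Clearly $A_y\subseteq\{y\in\xi^{\mathsf S}_{5n}\}$, and $A_y$ depends only on $\xi^{\mathsf S}_{4n}\cap[y-C,y+C]$ (a phase-1 quantity) together with the phase-2 percolation in the $O(C)$-wide tube $[4n,5n]\times[y-2C,y+2C]$. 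On $E$, the window $[y-C,y+C]$ contains many vertices of $\xi^{\mathsf S}_{4n}$, and a supercritical tube-crossing estimate yields $\mathsf P_\gamma(A_y\mid\xi^{\mathsf S}_{4n})\ge 1-\varepsilon$ uniformly on $E$, with $\varepsilon\to 0$ as $\gamma\to 1$.

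Conditionally on a phase-$1$ configuration lying in $E$, the family $(A_y)_y$ is $O(C)$-dependent with marginals at least $1-\varepsilon$, so LSS produces i.i.d.\ Bernoulli$(p_1)$ variables $Y_y\le\mathbf 1_{A_y}$ with $p_1=p_1(\varepsilon,C)\to 1$ as $\gamma\to 1$; an independent thinning brings the parameter from $p_1$ down to exactly $1/2$, yielding an i.i.d.\ Bernoulli$(1/2)$ family dominated by $\mathbf 1_{\{y\in\xi^{\mathsf S}_{5n}\}}$ on $E$. Taking $\gamma$ close enough to $1$ so that $c_0>\log 2$, the probability $\mathsf P(E^c)\le e^{-c_0 n}$ becomes smaller than $(1/2)^{|\text{target set}|}$, and a sprinkling argument extends the coupling across $E^c$. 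The main obstacle I expect is precisely this last cleanup across the exceptional event $E^c$: the conditional coupling is easy inside $E$, but promoting it to an honest unconditional stochastic domination requires either (i) making $c_0$ uniformly larger than $\log 2$ and absorbing the defect by a sprinkling adjustment, or (ii) bypassing LSS and verifying Strassen's sequential-conditioning criterion $\mathsf P_\gamma(y_k\in\xi^{\mathsf S}_{5n}\mid y_1,\ldots,y_{k-1}\notin\xi^{\mathsf S}_{5n})\ge 1/2$ directly, using a min-cut estimate on the number of closed vertices needed to simultaneously isolate many prescribed targets from $\mathsf S$.
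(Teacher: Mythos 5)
There is a genuine gap in the harvesting phase. Your event $A_y$ asks for an oriented open path from level $4n$ to level $5n$ that stays inside the tube $[4n,5n]\times[y-2C,y+2C]$, where $C=C(\delta)$ is a \emph{constant}. Oriented percolation confined to a strip of constant width $4C$ dies out for every fixed $\gamma<1$: the restricted process is a Markov chain on the subsets of $O(C)$ sites for which the empty (absorbing) state is reached at each step with probability at least $(1-\gamma)^{O(C)}>0$, so the probability of crossing a length-$n$ tube is at most $\bigl(1-(1-\gamma)^{O(C)}\bigr)^{cn}$, i.e.\ exponentially small in $n$. Hence the claimed uniform bound $\mathsf P_\gamma(A_y\mid \xi^{\mathsf S}_{4n})\ge 1-\varepsilon$ on $E$ is false, and the LSS step has nothing to work with. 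The error is structural but repairable: you must harvest over a \emph{constant} number of final levels (spread to level $5n-K$ with $K=K(\delta)$ and connect each target $y$ to a nearby occupied site by a path of length $K$, which costs only $\gamma^{K}$), since keeping the tubes of bounded width is exactly what makes the family $(A_y)$ finitely dependent; with tubes of width proportional to $n$ the crossing estimate would hold but the finite-dependence needed for LSS would be lost. Relatedly, your cleanup route (i) does go through, but only if \emph{every} phase-1 failure rate exceeds $\log 2$ uniformly over seed sets $\mathsf S$ of size $\delta n$ that may be packed in an interval of length $O(\delta n)$ (this needs contour/Peierls bounds whose rate blows up as $\gamma\to1$, and a Chernoff rather than a plain Hoeffding bound for the density deficit), and the absorption step itself needs the quantitative inequality $\nu_{p_1}(A)-\nu_{1/2}(A)\ge (p_1-\tfrac12)^{n+1}$ for proper increasing events $A$, which you gesture at but do not verify.

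For comparison, the paper avoids all of this quantitative bookkeeping: it takes $\gamma$ large so that $\xi^{\mathbb Z}_{5n}$ dominates a product measure of parameter $1-\varepsilon$ (Liggett--Steif), and then builds a single increasing ``X-shaped'' event $G$ (a path to distance $5n$ from a trimmed copy $\mathsf S'$ of $\mathsf S$, plus two transversal paths from the two ends of the starting segment) on which $\xi^{\mathsf S}_{5n}=\xi^{\mathbb Z}_{5n}$, with $\mathbb P(G)\ge 1-4^{-n}$. The passage from ``domination on a high-probability increasing event'' to honest domination is then done by comparing with $\eta\cdot\xi^{\mathbb Z}_{5n}$ for an independent Bernoulli$(3/4)$ thinning field $\eta$ and FKG, which is the clean packaging of exactly the cleanup you identified as the main obstacle. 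Your two-phase scheme can be made to work, but as written the tube-crossing step fails and the overall argument is substantially heavier than the coupling the paper uses.
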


\begin{proof} Let $\varepsilon>0$.  By Theorem~1.1 from \cite{Liggett_Steif_2006}, we can choose $\gamma$ close enough to 1 such that
   \begin{equation}
    \label{e:stationary_dominates}
    \xi_{5n}^{\mathbb Z} \text{ stochastically dominates the product measure with parameter } 1-\varepsilon.
  \end{equation}
    where we use the notation  $\xi_{5n}^{\mathbb Z}=\xi_{5n}^{\{0\}\times\mathbb Z}$.
  The main idea here is to use the stochastic domination above, and replace $\mathbb Z$ by $\mathsf S$. To achieve this, we construct an increasing  event $G$ such that
  \begin{enumerate}[(i)]
  \item \label{item:1} $G$ occurs with probability exponentially close to $1$,
  \item \label{item:2} on the event $G$, we have $\xi_{5n}^{\mathsf S}= \xi_{5n}^{\mathbb Z}$.
  \end{enumerate}
  These  properties together with \eqref{e:stationary_dominates} easily conclude the proof (the argument is presented at the end of the proof).   Let us now  construct the event $G$.  Set $\mathsf{S}' := \mathsf{S} \cap \big( \{0\} \times \big[ -n + \frac{\delta}{4}  n, n - \frac{\delta}{4} n \big] \big)$ and note that since $|\mathsf{S}| \geq \delta n$, we have $|\mathsf{S}'| \geq \frac{\delta}{2} n$.  Let $G$ be the event (illustrated on Figure~\ref{fig:2}) that
  \begin{itemize}
  \item $\mathsf S'\to v$ for some $v\in \{5n\}\times \mathbb Z$,
  \item there is an open oriented path from $\{0\}\times[-n, -n + \frac{\delta}{4} n]$ to  $\{5n\}\times [n,\infty)$,
  \item there is an open oriented path from $\{0\}\times[n - \frac{\delta}{4} n, n]$ to  $\{5n\}\times (-\infty, -n]$.
  \end{itemize}
  The first item occurs with probability larger than $1-\varepsilon^{\delta n/2}$. To see this, consider  the set $X$ of vertices $u\in \{0\}\times\mathbb Z$ that are connected to some $v\in  \{5n\}\times\mathbb Z$. Then use the  stochastic domination \eqref{e:stationary_dominates} and symmetry to show that $X$ stochastically dominates a product measure with parameter $1-\varepsilon$. The second and third item can be proved to occur with probability larger than  $1-\tfrac13 4^{-n}$ by choosing $\gamma$ close enough to 1. This follows from choosing the density of open vertices close enough to 1 in the proof of Theorem~4 in \cite{Durrett_Griffeath_1983}. Alternatively, this can also be proved directly using a Peierls-type argument. Therefore, if $\varepsilon>0$ is chosen small enough, we obtain that
  \begin{equation}
    \label{eq:27}
    \mathbb P(G) \ge 1-4^{- n},
  \end{equation}
  which establishes Item~(\ref{item:1}) above.
  
\begin{figure}[htbp]\label{fig:2}
  \centering
  \includegraphics[width=7.6cm]{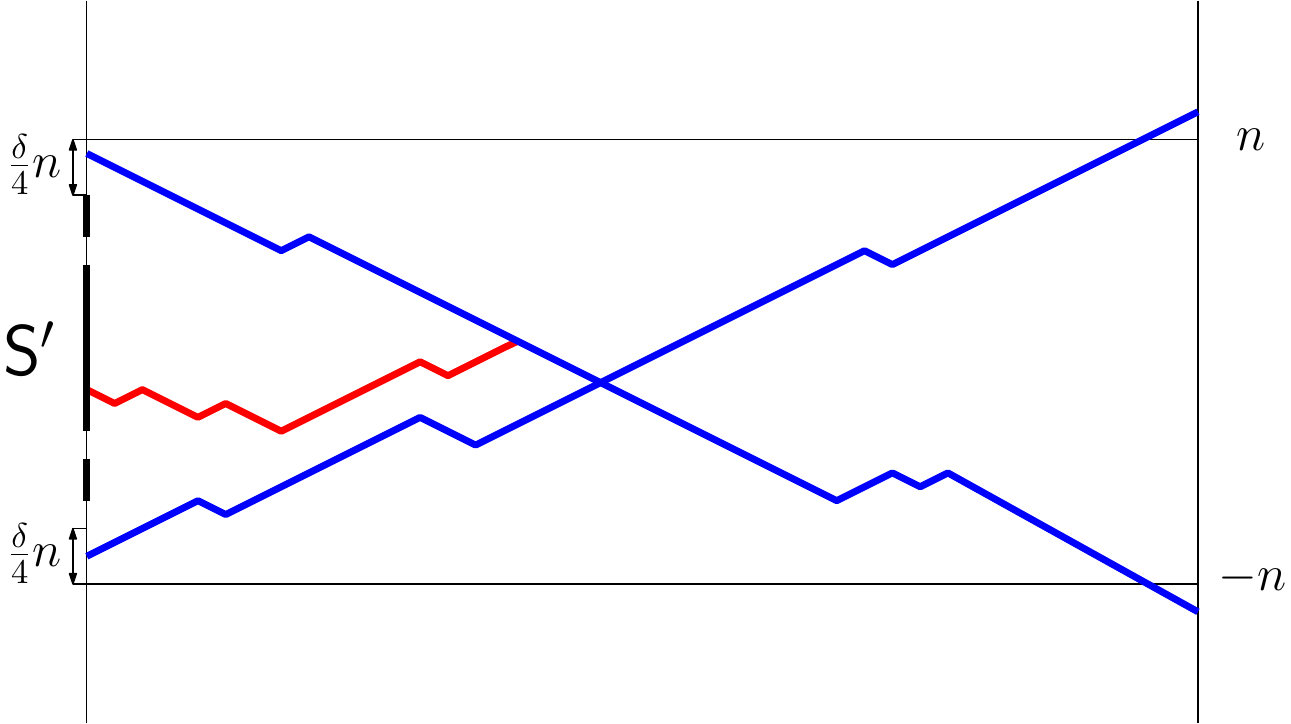}
  \caption{Schematic representation of the event $G$}
\end{figure}

Now, assume that the event $G$ occurs, and consider a vertex $v$ on $\{5n\}\times [-n,n]$ that can be reached by an open path starting on $\{0\}\times \mathbb Z$.
Then, from the three oriented paths that appear in the definition of $G$, one can produce an open oriented path from $\mathsf S'$ to $v$. In other words, when $G$ occurs, we have $\xi_{5n}^{\mathbb Z} \subseteq \xi_{5n}^{\mathsf S'}$. Since $\mathsf S' \subseteq \mathsf S$, the monotonicity property~\eqref{eq:28} directly implies that
\begin{equation}
  \label{eq:26}
  \xi_{5n}^{\mathbb Z} \subseteq \xi_{5n}^{\mathsf S}.
\end{equation}
By monotonicity again, the reverse inclusion also holds, which establishes Item~(\ref{item:2}). Once we know the stochastic domination \eqref{e:stationary_dominates} and the existence of an increasing  event $G$ satisfying (\ref{item:1}) and (\ref{item:2}) above, the proof of the lemma follows from a general stochastic domination argument that we detail now.

We consider an auxiliary product random variable $\eta=(\eta_{-n},\eta_{-n+2},\ldots,\eta_n)$ independent of the percolation configuration, where the $\eta_i$'s are i.i.d.~Bernoulli random variables with parameter~$\frac{3}{4}$. In order to prove the lemma, it is enough to show that $\xi^{\mathsf{S}}_{5n}$ stochastically dominates $\eta\cdot\xi^{\mathbb Z}_{5n}$ (where $\eta_1 \cdot \eta_2$ denotes the coordinate-wise product of $\eta_1$ and $\eta_2$). Let $f:\{0, 1\}^{\{-n,-n+2,\ldots,n\}}\to \mathbb R_+$  be a non-decreasing function such that $f(\mathbf 0)=0$.
We have
\begin{align*}
  \mathbb E[f(\xi^{\mathsf{S}}_{5n})]&\overset{\ \: f\ge0 \ \:}\ge  \mathbb E[f(\xi^{\mathsf{S}}_{5n})\mathbf 1_G] \\
                                               & \overset{\ \ (\ref{item:2}) \ \ }{=} \mathbb E[f(\xi^{\mathbb Z}_{5n})\mathbf 1_G]\\ &\overset{ \ \, \text{FKG} \ \, }{\ge} \mathbb E[f(\xi^{\mathbb Z}_{5n})]  \mathbb P (G)\\
                                     & \overset{\ \: \eqref{eq:27} \: \ }\ge \mathbb E[f(\xi^{\mathbb Z}_{5n})]\mathbb P(\eta\neq \mathbf 0)\\ & \overset{\ \text{indep.}}=  \mathbb E[f(\xi^{\mathbb Z}_{5n})\mathbf 1_{\eta\neq \mathbf 0}]\\& \overset{f(\mathbf 0)=0}\ge \mathbb E[f (\eta\cdot\xi_{5n}^{\mathbb Z})].                                  
\end{align*}
This proves the desired stochastic domination and concludes the proof. 
\end{proof}

Before moving to the proof of  Proposition~\ref{prop:oriented}, we use the planar result above to deduce a useful statement for oriented percolation on the  slab. Fix $h \geq 2$, and consider Bernoulli site  percolation  with parameter $0\le \gamma<1$ on the slab $\overrightarrow{\mathsf{Slab}}_h = (\mathsf V, \vec{\mathsf E})$ (where $\mathsf V \subseteq \mathbb{Z}^2 \times [0, h]$), as defined in \eqref{eq:vertices_slab} and \eqref{eq:oriented_edges}. For any given $n \ge m \geq h$, we also define the sets
\begin{equation}
  \begin{split}
    \mathsf{B}_{n, m} & := \mathsf{V} \cap \big( (n, 2n) \times (-2m, 2m) \times (0, h) \big),\\[2mm]
    \mathsf{L}_{n, m} & := \mathsf{V} \cap \big( \{n+1\} \times [-m ,m] \times (0, h) \big),\\[2mm]
   \text{and } \mathsf{R}_{n, m} & := \mathsf{V} \cap \big( \{2n-1\} \times [-m, m] \times (0, h) \big).
  \end{split}
\end{equation}
We extend the definition to non integers $m$ by setting $\mathsf{B}_{n, m}:=\mathsf{B}_{n, \lceil  m \rceil}$ (and similarly for $\mathsf{L}_{n, m}$ and $\mathsf R_{n, m}$).
Note that these sets are similar to those defined in \eqref{eq:macro_sets}, except that they are shorter in the $y$-direction.   For $\mathsf A \subset  \mathsf B\subset  \mathsf  V$ and $\mathsf v\in \mathsf B$, we say that  $\mathsf A \xrightarrow{}{} \mathsf v$ \emph{in $\mathsf B$}  if there exists an open  oriented path of $\overrightarrow{\mathsf{Slab}}_h$ that stays in $\mathsf B$ from a vertex of $\mathsf A$ to $\mathsf v$.

\begin{lemma} \label{l:dominationbis}
    For every $\delta > 0$, there exists $\gamma=\gamma(\delta) \in (0, 1)$ (large enough)  such that  for every $n \geq h\ge 6$ and $\mathsf{S} \subseteq \mathsf{L}_{n,  n/h}$ with $|\mathsf{S}| \geq \delta n$, we have 
  \begin{equation}
    \mathbb P \bigg( N > \frac 1 {20}n \bigg) \ge 1- e^{-cn}
  \end{equation}
  for some $c >0$, where $N$ is the number of vertices $\mathsf{u} \in \mathsf{R}_{n,  n/h }$  such that $\mathsf{S} \xrightarrow{}{} \mathsf{u}$  in $\mathsf{B}_{n, n/h }$.
\end{lemma}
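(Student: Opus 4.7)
The plan is to reduce Lemma~\ref{l:dominationbis} to the planar Lemma~\ref{l:domination} by identifying planar oriented subgraphs inside $\overrightarrow{\mathsf{Slab}}_h$ and combining many parallel applications running at disjoint height pairs.

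\textbf{Step 1 (Planar sub-slabs inside the slab).}
For any pair of consecutive $z$-levels $\{z_0,z_0+1\}\subset(0,h)$, the set of vertices $\mathsf v\in\mathsf V$ with $\mathsf v_3\in\{z_0,z_0+1\}$ is closed under the oriented edges of $\overrightarrow{\mathsf{Slab}}_h$, since every such edge changes $\mathsf v_3$ by $\pm 1$. The projection $(\mathsf v_1,\mathsf v_2,\mathsf v_3)\mapsto(\mathsf v_1,\mathsf v_2)$ is then a bijection onto the planar graph $G$ of Lemma~\ref{l:domination}. Crucially, the $\lfloor h/2\rfloor$ height pairs $\{0,1\},\{2,3\},\ldots$ are vertex-disjoint, so the restrictions of the slab percolation to these sub-slabs give mutually \emph{independent} copies of Bernoulli site percolation on~$G$.

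\textbf{Step 2 (Iterated application of Lemma~\ref{l:domination} in one pair).}
Write $m:=n/h$ and choose $\gamma=\gamma(\delta)$ close enough to $1$ so that Lemma~\ref{l:domination} applies with density parameter $\delta/4$. Inside any height pair in which $\mathsf S$ has density at least $\delta/4$ on $[-m,m]$, I iterate the planar lemma at scale $m$: each iteration advances $x$ by $5m$ and produces a set stochastically dominating Bernoulli$(1/2)$ on $[-m,m]$ at the next $x$-level. Concatenating $k:=\lceil(n-3)/(5m)\rceil\leq\lceil h/5\rceil$ iterations by the Markov property reaches $x=2n-1$ while keeping paths inside $\mathsf B_{n,n/h}$, and Hoeffding's inequality at the final step yields at least $m/3$ reached right-face vertices in the chosen pair with failure probability at most $k\,e^{-c_0 m}$.

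\textbf{Step 3 (Parallel runs and spreading across heights).}
A pigeonhole count on the distribution of $|\mathsf S|\geq\delta n$ across the $\Theta(h)$ heights shows that at least a positive fraction $c_1(\delta)$ of height pairs are \emph{directly seeded}, i.e., $\mathsf S$ has density $\geq\delta/4$ on $[-m,m]$ at those heights. For the remaining pairs, I prepend a short spreading zone of $x$-width $O(h)$ at the left face: the forced $\pm 1$ random walk of the $z$-coordinate along slab edges, combined with the abundance of open vertices for $\gamma$ close to $1$, deposits positive-density seeds on every height pair. Running the Step~2 construction in parallel inside all $\lfloor h/2\rfloor$ seeded sub-slabs and invoking independence across the disjoint pairs, Hoeffding's inequality on the number of successful parallel runs yields
\begin{equation*}
N \geq c_2(\delta)\, h \cdot (m/3) \;=\; c_2(\delta)\,n/3 \;\geq\; n/20
\end{equation*}
(adjusting constants), with total failure probability at most $e^{-cn}$ for a constant $c>0$ depending only on $\delta$.

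\textbf{Main obstacle.}
The delicate point is Step~3: the set $\mathsf S$ can be concentrated at very few heights, and a diffusive $z$-walk would naively require an $x$-budget of order $h^2$ to mix across all heights, whereas only $n\geq h$ is available. Bypassing this difficulty requires choosing $\gamma$ close enough to $1$ that the cluster is ``thick'' in the $z$-direction on short scales, and exploiting the independence across the $\lfloor h/2\rfloor$ disjoint sub-slabs in order to keep the exponential rate $c>0$ in the final bound $e^{-cn}$ uniform in $h$. A secondary technical point is that the planar lemma makes no statement about $y$-confinement, whereas paths in our setting must remain in $|\mathsf v_2|<2m$, which is ensured (again by taking $\gamma$ close enough to $1$) because typical open oriented paths are tightly concentrated near their line of drift at the relevant scales.
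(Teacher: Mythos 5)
The decisive difficulty of this lemma is exactly the point you defer to Step 3, and there the proposal stops being a proof. Each admissible row of $\mathsf{L}_{n,n/h}$ contains only about $m=n/h$ vertices, so $\mathsf{S}$ with $|\mathsf{S}|\ge \delta n$ may completely fill roughly $\delta h$ height pairs and be empty in all the others; your pigeonhole then seeds only a $\delta$-dependent fraction of the sheets, and running Step 2 in those sheets alone yields at best about $\delta h\cdot\tfrac m3=\tfrac{\delta}{3}n$ right-face vertices, which does not reach the universal threshold $\tfrac1{20}n$ for small $\delta$. Everything therefore rests on the ``spreading zone'': but seeding a sheet at $z$-distance of order $h$ from the support of $\mathsf{S}$ within an $x$-budget $O(h)$ requires open oriented paths of nearly maximal $z$-slope over $\Theta(h)$ steps, the seeding events for different target sheets are not independent (they share the spreading zone, so your later appeal to independence across sheets is compromised), and no quantitative estimate is offered --- ``$\gamma$ close to $1$ makes the cluster thick in $z$'' is precisely the statement that needs proving. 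Even granting per-sheet success probability $1-k e^{-c_0 m}$ with $k\asymp h/5$, aggregating the $\lfloor h/2\rfloor$ sheets gives a failure probability of order $2^{O(h)}\bigl(k e^{-c_0 m}\bigr)^{\Theta(h)}=e^{O(h\log h)-c_0' n}$, which is not $e^{-cn}$ uniformly in $h\le n$; and in the regime $h\asymp n$, where $m=n/h=O(1)$, each height-pair sheet is a strip of bounded $y$-width whose crossing probability over $x$-length $n$ is exponentially small, so the whole per-sheet scheme collapses although the lemma is claimed for all $n\ge h\ge 6$.

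Two further points. First, the vertex sets at heights $\{z_0,z_0+1\}$ are \emph{not} closed under the oriented edges (every edge changes $\mathsf{v}_3$ by $\pm1$, so edges leave the pair); what is true, and what you should say, is that the induced subgraph is isomorphic to $G$ and that restricting to it only removes paths, which is harmless for a lower bound. Second, the $y$-confinement issue is not ``secondary'': Lemma~\ref{l:domination} gives no control on the $y$-range of the connecting paths --- its proof compares with $\xi^{\mathbb Z}_{5n}$, i.e.\@ with paths started arbitrarily far away, and the paths it produces at scale $m$ can reach $|y|\approx 3.5m>2m$, so domination \emph{by paths inside} $\mathsf{B}_{n,n/h}$ would require proving a confined variant of the planar lemma; ``typical paths stay near their drift'' is not such a statement. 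For contrast, the paper proves the lemma with a single application of Lemma~\ref{l:domination} at scale of order $n$, via a deterministic injective graph embedding (an ``accordion'' folding) of the planar rectangle $[0,n]\times[-2n,2n]$ into $\mathsf{B}_{n,n/h}$: the folded left boundary visits every height, so a suitable translate captures a $\tfrac{\delta}{10}$-fraction of $\mathsf{S}$ however it is distributed in $z$, confinement is automatic since the entire folded rectangle lies in the box, and the estimate is uniform in $h\le n$; this removes precisely the obstacles on which your construction founders.
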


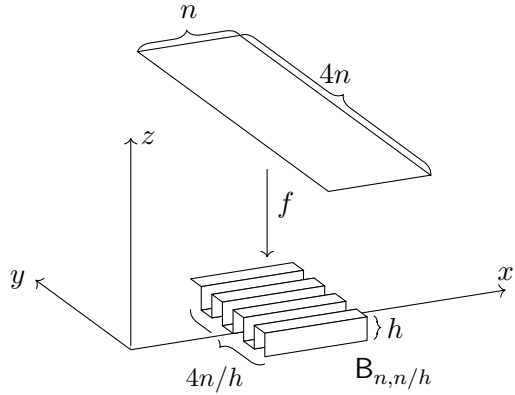
\begin{figure}[ht]
  \centering
  \tdplotsetmaincoords{70}{110}
  \begin{tikzpicture}[scale=.5,tdplot_main_coords]
    \tdplotsetrotatedcoords{0}{0}{135}
    \begin{scope}[tdplot_rotated_coords]
      \draw[->] (-3,0,0) -- (8,0,0) node[anchor=south]{$x$};
      \draw (0,-6,6) -- (0, 6, 6) -- (3, 6, 6) -- (3, -6, 6) -- cycle;
      \draw[decorate,decoration={brace,amplitude=5pt}] (0, 6, 6) -- (3, 6, 6) node [black, midway, yshift=12pt] {$n$};
      \draw[decorate,decoration={brace,mirror,amplitude=5pt}] (3, -6, 6) -- (3, 6, 6) node [black, midway, yshift=12pt] {$4n$};
      \foreach \n in {9, ..., 3}
      {
        \fill[rounded corners=.02, white,draw=black]
        (0, 2*\n/3 - 4, 2/3 * \modulo{\n}{2}) -- (3, 2*\n/3 - 4, 2/3 * \modulo{\n}{2})
        -- (3, 2*\n/3 - 10/3, 2/3 * \modulo{\n}{2}) -- (0, 2*\n/3 - 10/3, 2/3 * \modulo{\n}{2}) -- cycle;
        \fill[rounded corners=.02, white,draw=black]
        (0, 2*\n/3 - 4, 0) -- (3, 2*\n/3 - 4, 0) -- (3, 2*\n/3 - 4, 2/3) -- (0, 2* \n/3 - 4, 2/3) -- cycle;
      }
      \draw[decorate,decoration={brace,mirror,amplitude=2.5pt}] (3.2, -2, 0) -- (3.2, -2, 2/3) node [black, midway, xshift=8pt] {$h$};
      \draw[decorate,decoration={brace,amplitude=5pt}] (0, -2, -.2) -- (0, 2 + 2/3, -.2) node [black, midway, yshift=-16pt,xshift=-5pt] {\small $4  n/h $};
      \draw[->] (-3,0,0) -- (-3,6,0)  node[anchor=east]{$y$};
      \draw[->] (-3,0,0) -- (-3,0,6)  node[anchor=west]{$z$};
      \draw[->] (1, 0, 4.5) -- (1, 0, 2);
      \node[anchor=west] at (1, 0, 3.5) {$f$};
      \draw[line width=3pt,color=white] (-3,0,0) -- (-.4,0,0);
      \draw (-3,0,0) -- (-.4,0,0);
      \node[yshift=-8pt,xshift=4pt] at (3, -3, 0) {$\mathsf{B}_{n,  n/h }$};
    \end{scope}
  \end{tikzpicture}
  \caption{The embedding used in the proof of Lemma~\ref{l:dominationbis}.}
  \label{f:accordion}
\end{figure}

\begin{proof}[Proof of Lemma~\ref{l:dominationbis}]
  For simplicity, we assume that $h$ divides $n$. It is enough to provide a proper graph embedding, or more precisely to find an injective  mapping $f: V\cap( [0, n] \times [-2n, 2n]) \to \mathsf{B}_{n, n/h}$ satisfying:
  \begin{enumerate}[\quad a)]
  \item there are at least $\tfrac \delta {10} n$  vertices  $u \in \{0\} \times [-n, n]$ such that   $f(u) \in \mathsf S$,
  \item for every $u \in \{n\} \times [-n, n]$, we have $f(u) \in \mathsf{R}_{n,  n/h }$,
  \item and if $(u,v)$ is an edge on the oriented graph $(V,E)$, then $(f(u), f(v))$ is an edge in $(\mathsf V, \vec{\mathsf{E}})$.
  \end{enumerate}
  Once we have such an embedding at our disposal, we can simply use Lemma~\ref{l:domination} (combined with Hoeffding's inequality).

  The construction of the embedding $f$ follows from an ``accordion-like'' folding of the rectangle, and then finding a translate of this accordion such that property $a$ above occurs.
  We refrain from giving a detailed definition of the function $f$, and we give instead an illustration of it in Figure~\ref{f:accordion}.
\end{proof}

We are now in a position to prove the main proposition of this section.

\begin{proof}[Proof of Proposition~\ref{prop:oriented}]
  We start the proof by making some assumptions that do not reduce the generality of the result.
  First suppose that the side length of the rectangle is a multiple of $40h$  (this way, all the fractions of $n$ appearing in the proof will be integers).

  Observe that the starting set $\mathsf L_{n}$ can be written as the union
  \begin{equation}
    \label{eq:25}
    \mathsf L_n= I_{-5}\cup\cdots\cup  I_4,
  \end{equation}
  where $I_i:= \mathsf V \cap (\{n\} \times [in/5,in/5+n/5] \times [0, h])$. Since $\mathsf S$ has at least $\delta nh$ points, we deduce that the intersection of $\mathsf S$ with at least one of the $ I_i$'s contains at least $\delta nh/10$ points. Without loss of  generality, we assume  that
  \begin{equation}
    \label{eq:29}
    |\mathsf S \cap I_1| \geq \frac{\delta n h}{10}.
  \end{equation}
  We are going to construct the connections described in Proposition~\ref{prop:oriented} in two steps. Intuitively speaking, we show that the infection spreads from left to right in two steps, first until the middle of the box ($x = 3n/2$), where the infection becomes evenly distributed across the $y$-direction.
  Then from the middle to the end ($x = 2n$), where we show that it spreads across the $z$-direction.
  Let us state precisely these two steps of the proof.
  We first choose a collection $J_1, \ldots, J_{h/30} $ of rectangles of the form
  $$J_i=\mathsf V \cap \bigg( \{3n/2\} \times \bigg[ j_i - \frac n h , j_i +  \frac n h \bigg] \times [0, h] \bigg)$$
  such that the mutual distance between different $J_i$'s is strictly larger than $\frac{2n}{h}$. We say that some set $J_i$ is bad if
  \begin{equation}
    \big| \{u \in J_i \: : \: \mathsf{S} \to u \} \big| \leq \tfrac \delta{100}|J_i|.
  \end{equation}
  In order to establish Proposition~\ref{prop:oriented}, we claim that for some  constant $c > 0$ (depending on~$\delta$)
  \begin{equation}
    \label{e:step_1}
    \mathsf P_\gamma \Big( \text{at least $\tfrac h {60}$ sets $J_i$ are bad} \Big) \leq e^{-c h n}
  \end{equation}
(this claim will be established at the end of the proof).  In the final step of the proof, we assume that at least $\frac{h}{60}$ sets $J_i$ are good. For each good $J_i$, we consider the set $S_i$ (with density at least $\frac{\delta}{100}$ in $J_i$) of vertices in $J_i$ which can be reached from $\mathsf S$ by an open oriented path, and we work in the box $K_i=[\tfrac32n,n]\times [ j_i - \tfrac {2n} h , j_i +  \frac {2n} h \big] \times [0, h]$. Using Lemma~\ref{l:dominationbis}, we can  show that with probability larger than $1-e^{-cn}$, we can reach at least $\frac1{20}n$  points in $\mathsf R_n$ with oriented paths starting in $S_i$ and staying in $K_i$. Since the boxes $K_i$ are disjoint, this construction is successful for at least $\frac{h}{100}$ indices $i$, with probability larger than $1-e^{-c'hn}$  (by  Hoeffding's inequality). Therefore,
  \begin{equation}
    \label{eq:30}
    \mathsf{P}_\gamma \Big( \big| \big\{ \mathsf v \in \mathsf{R}_n \: : \: \mathsf{S} \xrightarrow{}{} \mathsf v \big\} \big| \geq
    \frac h {100}\cdot \frac1{20} n \Big) \geq 1- e^{- c h n}-e^{-c'hn},
  \end{equation}
  which  implies Proposition~\ref{prop:oriented}.

  Let us now prove \eqref{e:step_1}. Assume for simplicity that $h$ is odd.  We split the set $I_1$ into its rows $I_1 = \cup_{i = 0,2, \ldots, h-1} C_i$, where $C_i$ is defined as the set of points in $I_1$ with $z=i$.
  We can bound the size of $\mathsf{S}$ intersected with $I_1$ as follows:
  \begin{equation}
    |\mathsf{S}\cap I_1 | \leq n \cdot \Big| \Big\{ i \: : \: |\mathsf{S} \cap C_i| \geq \frac{\delta n}{20} \Big\} \Big| + \frac{\delta n  h}{20}.
  \end{equation}
  Since the left-hand side is larger than $\delta nh/10$ (see Eq.~\eqref{eq:29}), we deduce 
  \begin{equation}
    \label{e:size_S_prime}
    \Big| \Big\{ i\: : \: |\mathsf{S} \cap C_i| \geq \frac{\delta n}{20} \Big\} \Big| \geq \frac{\delta h}{20}.
  \end{equation}
  We denote the above set of good columns by $G \subseteq \{0,2, \ldots, h-1\}$.   For each good column $C_i$, $i \in G$, we restrict our oriented percolation to the sheet that is isomorphic to the oriented square lattice, and is contained in $\mathbb{Z} \times \mathbb{Z} \times \{i, i + 1\}$.
  This restriction can only reduce the set of open paths and it has two advantages: distinct sheets become independent, and it allows us to use the planar result Lemma~\ref{l:domination}.

  Consider the set $\mathsf{S}' = \mathsf V\cap(\{3n/2\} \times [0,n/10] \times G)$.
  Applying Lemma~\ref{l:domination}  produces $\gamma$ for which
    $$\big\{ u \in \mathsf{S}' \: : \: \mathsf{S} \xrightarrow{\mathsf B_n}{} u \big\} \text{ stochastically dominates the product measure on } \mathsf{S}' \text{ w.p. } \frac{1}{2}.$$
  This implies that for each $i$, since $|J_i\cap \mathsf{S}'| \ge \frac \delta{20} |J_i\cap \mathsf V| $, we have
  $$P \big( J_i \text{ is bad} \big) \leq P \Big( \Bin \Big(  \tfrac \delta{20} |J_i\cap \mathsf V| , \tfrac{1}{2} \Big) \leq  \tfrac \delta{100} |J_i\cap \mathsf V|  \Big) \leq e^{-c_0 n}$$
  for some constant $c_0>0$ (using again Hoeffding's inequality). Hence,
      $$P \Big( \text{at least } \tfrac{h}{60} \text{ sets } J_i \text{ are bad} \Big) \leq 2^{h/30} \sup_{\substack{D \subseteq \{0, \ldots, h/30\}\\ |D| \geq \frac{h}{60}}} P \big( J_i \text{ is bad for all } i \in D \big) \leq 2^{h/30} \big( e^{-c_0n} \big)^{h/60},$$
  which  establishes \eqref{e:step_1}.
\end{proof}

\section{Open problems}
\label{sec:open}

We state two open problems that follow naturally from the results presented in this work. The first problem is related to local uniqueness for percolation of words.

\begin{problem}
For percolation of words on $\ZZ^d$, $d \geq 3$, at $p \in (p_c^{\textrm{site}}(\ZZ^d), 1 - p_c^{\textrm{site}}(\ZZ^d))$, consider the following event. For $n \geq 1$, $U_n := \{$there exist $x, y \in B_n(0)$ and $\xi \in \Xi$ for which $x \zgg{\xi} \infty$ and $y \zgg{\xi} \infty$, but there does not exist any way to see $\xi$ starting from $x$ and $y$ that ``coalesces'' before exiting $B_{2n}(0) \}$. More precisely, we require $x$, $y$ and $\xi$ to satisfy: if $\gamma$ and $\gamma'$ are two infinite self-avoiding paths from $x$ and $y$, respectively, along which $\xi$ is seen, then $\gamma$ and $\gamma'$ do not ``meet'' (i.e.\@ intersect each other at the same time) before exiting $B_{2n}(0)$. What is the rate of decay of $\PP_p(U_n)$ as $n \to \infty$?
\end{problem}

In a possible generalization of words to higher-dimensional objects, we can ask the following question.

\begin{problem}
  What can be said about ``percolation of images'', i.e.\@ when is it possible to embed, in a Lipschitz way, all (or almost all) elements of $\{0, 1\}^{\NN^2}$ (or $\{0, 1\}^{\ZZ^2}$) into a configuration of Bernoulli site percolation on $\ZZ^d$, $d \geq 3$?
\end{problem}

Let us also mention that the construction in Section~4 of \cite{Benjamini_Kesten_1995} shows the following. For any fixed $p \in (0,1)$, there exists $d = d(p)$ large enough such that in $\ZZ^d$: $\PP_p$-a.s., all words can be seen from the neighbors of a single vertex. Our proofs show that for all $d \geq 3$ and $p \in (p_c^{\textrm{site}}(\ZZ^d), 1 - p_c^{\textrm{site}}(\ZZ^d))$, there exists $n = n(d,p)$ large enough so that: $\PP_p$-a.s., there exists a ball $B_n(v)$ ($v \in \ZZ^d$) of radius $n$ from which all words can be seen. However, the value of $n$ produced by the proof, using a renormalization procedure, is typically very large, and we cannot say anything about what the optimal $n$ should be. For example, it leaves completely open the question of whether $d(\frac{1}{2}) = 3$, i.e.\@ all words can already be seen from the neighbors of a single vertex on $\ZZ^3$, at $p = \frac{1}{2}$.

For more open questions and comments regarding percolation of words, the reader can consult the classification conjecture of de Lima and Sidoravicius, stated in Section 1.2 of \cite{Hilario_Lima_Nolin_Sidoravicius_2014}. In the present paper, we completely verified part II.(c) of this conjecture in the case of $\ZZ^d$, $d \geq 3$. Also, since Wierman's coupling is fully general, our proof suggests that II.(c) should be valid for a rather wide family of graphs, on which the renormalization procedure can be carried out.

\bibliographystyle{plain}
\bibliography{Percolation_words}

\end{document}